\def\@settitle{\begin{center}
		\baselineskip14\p@\relax
		\bfseries
		\LARGE
		\@title
	\end{center}
}
\newtheorem{theorem}{Theorem}
\newtheorem{lemma}{Lemma}
\newtheorem{corollary}[theorem]{Corollary}
\theoremstyle{definition}
\newtheorem{definition}{Definition}
\newtheorem{remark}{Remark}
\renewcommand{\leq}{\leqslant}
\newcommand{\ot}{\otimes}
\newcommand{\x}{\otimes}
\DeclareMathOperator{\Ext}{Ext}
\DeclareMathOperator{\Hom}{Hom}
\title{$BV$-structure on Hochschild cohomology for exceptional local algebras of quaternion type. Case of even parameter.}
\author{Alexander Generalov and Andrei V. Semenov}
\thanks{This work is partially supported by Young Russian Mathematics award and the second author is grateful to its jury and sponsors; is supported by ``Native Towns'', a social investment program of PJSC ``Gazprom Neft''. Also the second author is supported in part by The Euler International Mathematical Institute, grant number is 075-15-2019-16-20. The first author is partially supported by the RFFI under Grant No. 20-01-00030.}
\address{Alexander Generalov:
Saint Petersburg State University, Universitetskaya nab., 7-9,
St. Petersburg, 199178 Russia}
\email{ageneralov@gmail.com}
\address{Andrei V. Semenov:
Chebyshev Laboratory, 
St. Petersburg State University, 14th Line V.O., 29B, 
Saint Petersburg 199178 Russia}
\email{asemenov.spb.56@gmail.com}
\keywords{Hochschild cohomology, Homological algebra, Gerstenhaber bracket, Lie algebra, BV-structure}
\begin{document}
\maketitle

\begin{abstract} We give a full description of the $BV$-structure on the Hochschild cohomology of exceptional local algebras of quaternion type, defined by parameters $(k,0,d)$ in case of even parameter $k \geqslant 3$, according to Erdmann's classification. We develop and use the method of comparison morphisms and weak self-homotopy method. This article states as a generalization of similar results about $BV$-structures on Hochscild cohomology of algebras of quaternion type.
\end{abstract}

\vspace{10mm}

\section{Introduction}
Algebras of dihedral, semi-dihedral and quaternion types arise naturally from Erdmann's classical book \cite{E} as a product of classification of tame blocks. The Hochschild cohomology of such algebras are well-studied by the large amount of mathematicians, such as A. I. Generalov (see \cite{G1}, \cite{G2}, \cite{GII}), A. A. Ivanov (see \cite{I1}, \cite{I2}), C. Cibils (see \cite{CS}) and many others: see also \cite{V2} and \cite{GA}, where one could find more references on studies of Hochschild cohomology. For an associative algebra $A$ there are many structures on its Hochschild cohomology algebra $HH^*(A)$ of $A$: for example, it has a graded commutative algebra structure (see \cite{H}) and graded Lie algebra structure, introduced by Gerstenhaber in his paper \cite{Ger}. T. Tradler was the first who described $BV$-structure on Hochschild cohomology of finite dimensional symmetric algebras (see \cite{Tr}). The problem here is that $BV$-structure is defined in terms of the bar-resolution. It is almost impossible to compute such structure for concrete examples, because the dimension of resolution's items grows exponentially. In order to avoid this problem we use a method of comparison morphisms (see also \cite{IIVZ} and \cite{I2}). \par 
The significance of $BV$-structure is that it gives a method to compute the Gerstenhaber Lie
bracket, which is important and hard-reached structure on $HH^*(A)$. In this paper we deal with the Hochschild cohomology algebra for algebras of quaternion type $R(k,0,d)$ over an algebraically closed field $K$ of characteristic 2, described in paper \cite{GA}. Some partial cases of this family of algebras were studied in \cite{IIVZ} for the case of $R(2,0,0)$ and in \cite{I2} for the case of $R(k,0,0)$. Here we study only case of even parameter $k \ge 3$, because in \cite{GA} it was shown that cases of even and odd parameter are differ significantly. \par 
Note that the calculation of these structures is a difficult task and there are only a few
examples of such calculations: for example, one should mention the papers of Menichi (see \cite{Me1, Me2}), Yang (see \cite{Y}), Tradler (see \cite{Tr}), Ivanov (see \cite{I2}) and two articles \cite{V1} and \cite{TGZ} about $BV$-structures of Frobenius algebras. 

\section{Main definitions and facts}
\subsection{Hochschild (co)homology}
For an associative algebra $A$ over a field $K$ it's $n$-th Hochschild cohomology is a vector space $HH^n(A) = \Ext^n_{A^e} (A,A)$ for $n \geqslant 0$, where $A^e= A \otimes A^{op}$ is the enveloping algebra for $A$. Notice that the {\it bar-resolution } is a free resolution 
$$\CD A @<{\mu = d_0}<< A^{\otimes 2} @<{d_1}<<  A^{\otimes 3} @<{d_2}<< \dots @<{d_n}<< A^{\otimes n+2} @<{d_{n+1}}<< A^{\otimes n+3} \dots \endCD$$
with differentials
$$d_n(a_0 \x \dots a_{n+1}) = \sum \limits_{i=0}^{n} (-1)^i a_0 \x \dots \x a_i a_{i+1} \x \dots  \x a_{n+1}.$$
One can construct the {\it normalized bar-resolution}, in which $n$-th element is given by formula $\overline{Bar}(A)_n = A \otimes \overline{A}^{\x n} \x A$, where $\overline{A} = A/ \langle 1_A \rangle$, and the differentials are induced by that of the bar-resolution. \par 

We define the $n$-th Hochschild homology space $HH_n(A)$ as follows:
$$HH_n(A) = H_n(A \x_{A^e} Bar_{\bullet}(A)) \simeq H_n(A^{\x (\bullet+1)}),$$
where the differentials $\partial_n : A^{\x (n+1)} \longrightarrow A^{\x n}$ comes by mapping $a_0 \x \dots \x a_n$ to
$$\sum_{i=0}^{n-1} (-1)^i a_0 \x \dots \x a_{i} a_{i+1} \x \dots \x a_n + (-1)^{n} a_na_0 \x \dots \x a_{n-1}.$$

Let's look closely on complex $(\Hom_{A^e}(Bar_{\bullet}(A), A), \ \delta^{\bullet})$. As always,
$$HH^{\bullet}(A) = H^{\bullet}(\Hom_{A^e}(Bar_{\bullet}(A),A)) \simeq H^{\bullet}(\Hom_k(A^{\x \bullet}, A)),$$
and for $f \in \Hom_k(A^{\x n}, A)$ the element $\delta^n (f)$ maps $a_1 \x \dots \x a_{n+1}$ to
$$a_1 f(a_2 \x \dots \x a_{n+1}) + \sum_1^n (-1)^i f(a_1 \x \dots \x a_i a_{i+1} \x \dots \x a_{n+1}) + (-1)^{n+1}f(a_1 \x \dots \x a_{n})a_{n+1}.$$
One can describe a cup-product on $HH^*(A)$: for classes $a \in HH^n(A)$ and $b \in HH^m(A)$ its cup-product $a \smile b \in HH^{n+m}(A)$ is defined by the class of the cup-product of representatives $a \in \Hom_k(A^{\x n}, A)$ and $b \in \Hom_k(A^{\x m}, A)$. So by linear extension
$$\smile : HH^n(A) \times HH^m(A) \longrightarrow HH^{n+m}(A)$$
the Hochschild cohomology space $HH^{\bullet}(A) = \bigoplus \limits_{n \geqslant 0} HH^n(A)$ becomes a graded-commutative algebra. 
\subsection{Gerstenhaber bracket}
For $f \in \Hom_k(A^{\x n}, A)$ and $g \in \Hom_k(A^{\x m}, A)$ one can define $f \circ_i g \in \Hom_k(A^{\x n+m-1}, A)$ by the following rules:
\begin{enumerate}
    \item if $n \geqslant 1$ and $m \geqslant 1$, put $f \circ_i g (a_1 \x \dots \x a_{n+m-1}) = f(a_1 \x \dots a_{i-1} \x g(a_i \x \dots a_{i+m-1}) \x \dots \x a_{n+m-1})$.
    \item if $n \geqslant 1$ and $m=0$, put $f \circ_i g (a_1 \x \dots \x a_{n-1}) = f(a_1 \x \dots a_{i-1} \x g \x a_i \x  \dots \x a_{n+m-1})$, because $g \in A$ in this case. 
    \item otherwise set $f \circ_i g = 0$.
\end{enumerate}
So put $a \circ b = \sum \limits_{i=1}^n (-1)^{(m-1)(i-1)} a \circ_i b$.
\begin{definition}
For any $f \in \Hom_k(A^{\x n}, A)$ and $g \in \Hom_k(A^{\x m}, A)$ we define the {\it Gerstenhaber bracket of f and g} by the formula 
$$[f,g] = f \circ g - (-1)^{(n-1)(m-1)} g \circ f.$$
\end{definition}
This bracket obviously lies in $\Hom_k(A^{\x n+m-1}, A)$, so for $a \in HH^n(A)$ and $b \in HH^m(A)$ we can define $[a,b] \in HH^{n+m-1}(A)$ as a class of Gerstenhaber bracket for representatives $a$ and $b$. This bracket correctly induces map
$$[-,-]:HH^{*} (A) \times HH^{*} (A) \longrightarrow HH^{*} (A),$$
which gives us a structure of graded Lie algebra on Hochschild cohomology. Also one can show that $(HH^{*} (A), \smile, [-,-])$ is a Gerstenhaber algebra (see \cite{Ger}).

\subsection{$BV$-structure}
\begin{definition}
The {\it Batalin-Vilkovisky algebra} (or $BV$-algebra) is a Gerstenhaber algebra $(A^{\bullet}, \smile, [-,-])$ together with an operator $\Delta^{\bullet}$ of degree $-1$, such that $\Delta \circ \Delta = 0$ and 
$$[a,b] = - (-1)^{(|a|-1)|b|} \big(\Delta(a \smile b) - \Delta(a) \smile b - (-1)^{|a|} a \smile \Delta(b) \big)$$
for homogeneous $a, b \in A^{\bullet}$.
\end{definition}
Note that in this definition, we use the signs as in \cite{Tr} (cf. \cite{JG}, Remark 2.5).\par 
For $a_0 \x \dots \x a_n \in A^{\x (n+1)}$ define
$$\mathfrak{B}(a_0 \x \dots \x a_n) = \sum_{i=0}^n (-1)^{in} 1 \x a_i \x \dots \x a_n \x a_0 \x \dots \x a_{i-1} + $$
$$+\sum_{i=0}^n (-1)^{in} a_i \x 1 \x a_{i+1} \x \dots \x a_n \x a_0 \x \dots \x a_{i-1}.$$
Obviously $\mathfrak{B}(a_0 \x \dots \x a_n) \in A^{\x (n+2)} \simeq A \x_{A^e} A^{\x (n+3)}$, hence it can be lifted to the chain map of complexes. Observe that $\mathfrak{B} \circ \mathfrak{B} = 0$, so one can correctly define induced map on $HH^*(A)$.
\begin{definition}
The above defined map $\mathfrak{B}: HH_{\bullet}(A) \longrightarrow HH_{\bullet+1}(A)$ is said to be Connes' $\mathfrak{B}$-operator.
\end{definition}
\begin{definition}
An algebra $A$ is said to be symmetric, if it is isomorphic (as $A^e$-module) to its dual $DA = \Hom_K(A,K)$.
\end{definition} 
For a symmetric algebra $A$ one can always find a non-degenerate symmetric biliniar form $\langle -,- \rangle : A \times A \longrightarrow K$. Obviously, the reversed statement holds: for any such form the corresponding algebra $A$ is symmetric. So in the case of symmetric algebras the Hochschild homology and colomology are dual to each other:
$$\Hom_K(A \x_{A^e} Bar_{\bullet}(A),K) \simeq \Hom_{A^e} (Bar_{\bullet}(A), \Hom_K(A,K)) \simeq \Hom_{A^e}(Bar_{\bullet}(A), A),$$
hence there exists an operator $\Delta: HH^n(A) \longrightarrow HH^{n-1}(A)$, which corresponds to Connes' $\mathfrak{B}$-operator. \par 
So for symmetric $A$ the algebra $HH^*(A)$ is clearly a $BV$-algebra (see \cite{Tr} for details), and, as noticed, Connes' operator for homology corresponds to $\Delta$ operator on cohomology.
\begin{theorem}[Theorem 1 in \cite{M}]
The above defined cup-product, Gerstenhaber bracket and operator $\Delta$ induce a structure of $BV$-algebra on  $HH^{*}(A)$. Moreover, for $f \in \Hom_K(A^{\x n}, A)$ the element $\Delta(f) \in \Hom_K(A^{\x (n-1)}, A)$ can be calculated by the formula
$$\langle \Delta(f)(a_1 \x \dots \x a_{n-1}),a_n \rangle = \sum_{i=1}^n (-1)^{i(n-1)} \langle f(a_i \x \dots \x a_{n-1}\x a_n \x a_1 \x \dots \x a_{i-1}), 1 \rangle$$
for any $a_i \in A$.
\end{theorem}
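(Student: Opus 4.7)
The plan is to proceed in three phases. The first step is to make the duality $HH^n(A) \simeq Hom_K(HH_n(A), K)$ explicit at the cochain level using the non-degenerate invariant form $\langle -,- \rangle$ arising from the symmetry of $A$: a cochain $f \in Hom_K(A^{\otimes n}, A)$ is sent to the functional on $A^{\otimes (n+1)}$ defined by $a_0 \otimes \cdots \otimes a_n \mapsto \langle f(a_1 \otimes \cdots \otimes a_n), a_0 \rangle$, with the inverse reconstructed using non-degeneracy. The invariance property $\langle xy, z \rangle = \langle y, zx \rangle$ guarantees that this is an isomorphism of $A^e$-complexes and hence descends to the desired duality in (co)homology.

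Second, I would define the operator $\Delta$ on cochains by transporting Connes' $\mathfrak{B}$-operator through this duality, so that $\Delta(f)$ is characterized by pulling back $\mathfrak{B}$ under the pairing. Substituting the explicit two-sum formula for $\mathfrak{B}$ from the excerpt and using invariance of the form to rotate the distinguished unit $1$ into the slot paired against in $\langle -, 1 \rangle$ collapses the ``second summand'' of $\mathfrak{B}$ against the ``first summand,'' yielding exactly the closed formula
$$\langle \Delta(f)(a_1 \otimes \cdots \otimes a_{n-1}), a_n \rangle = \sum_{i=1}^n (-1)^{i(n-1)} \langle f(a_i \otimes \cdots \otimes a_n \otimes a_1 \otimes \cdots \otimes a_{i-1}), 1 \rangle$$
stated in the theorem. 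The identity $\Delta \circ \Delta = 0$ then transports directly from $\mathfrak{B} \circ \mathfrak{B} = 0$ on $HH_\bullet(A)$, and the descent of $\Delta$ to $HH^\bullet(A)$ is automatic because $\mathfrak{B}$ is a chain map.

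The third and by far most substantial step is verification of the $BV$-identity
$$[a,b] = -(-1)^{(|a|-1)|b|}\bigl(\Delta(a \smile b) - \Delta(a) \smile b - (-1)^{|a|} a \smile \Delta(b)\bigr).$$
My approach is to pair both sides against an arbitrary argument $a_1 \otimes \cdots \otimes a_{n+m-1}$ and test element $a_{n+m}$, expand $\Delta(a \smile b)$ by the formula above, and partition the resulting cyclic sum by whether the rotation index falls strictly inside the $a$-block, strictly inside the $b$-block, or exactly at the junction between them. The first two ranges should reassemble, after one more use of invariance of the form, into $\Delta(a) \smile b$ and $(-1)^{|a|} a \smile \Delta(b)$ respectively, while the junction terms recombine into the two halves $a \circ b$ and $b \circ a$ of the Gerstenhaber bracket. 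The main obstacle I anticipate is purely the sign bookkeeping: tracking the Koszul signs generated by cyclic rotation of tensor factors against the alternating signs built into the definition of $\circ$, and, where a discrepancy survives at the cochain level, exhibiting an explicit Hochschild coboundary to absorb it so that the identity holds on cohomology classes.
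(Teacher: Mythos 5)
The paper does not prove this statement at all: it is imported wholesale from the literature (it is Tradler's theorem; note the paper's citation points to MacLane's \emph{Homology}, which appears to be a reference slip --- the intended source is \cite{3} or the account in \cite{1}). So your proposal cannot be compared to an internal argument; it can only be judged as a reconstruction of the standard proof, and as such it follows the expected Tradler--Menichi route: chain-level duality from the symmetric form, transport of Connes' operator, then a direct verification of the BV identity. Phases 1 and 2 are essentially right, with one small correction: in the normalized setting the second summand of $\mathfrak{B}$ (the terms $a_i \otimes 1 \otimes \cdots$) does not ``collapse against'' the first summand via invariance --- it simply vanishes, because the dual pairing feeds the interior $1$ into an argument slot of the normalized cochain $f$, which kills it. Only the first summand survives, and it yields the stated formula directly.

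The substantive concern is phase 3. Your decomposition of the cyclic sum for $\Delta(a \smile b)$ into ``inside the $a$-block,'' ``inside the $b$-block,'' and ``junction'' terms does not literally reproduce $\Delta(a)\smile b$, $(-1)^{|a|}a\smile\Delta(b)$, and $[a,b]$ at the cochain level: the rotations that split the $b$-block across the wrap-around point produce terms in which the arguments of $b$ are distributed on both sides of $a$'s arguments, and these match the $\circ_i$-compositions in the bracket only up to an explicit homotopy. Constructing that homotopy (equivalently, the Hochschild coboundary you defer to the final clause of your proposal) is the entire technical content of Tradler's theorem, not a residual sign discrepancy to be absorbed at the end. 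As a plan your outline is sound and identifies the right obstacle, but be aware that the sentence ``exhibiting an explicit Hochschild coboundary to absorb it'' is carrying essentially all of the weight of the proof; as written the proposal establishes the formula for $\Delta$ and the identity $\Delta^2=0$, but not the BV relation itself.
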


\begin{remark} All constructions here can be described in terms of the normalized bar-resolution.
\end{remark}

\section{Weak self-homotopy}
\subsection{The resolution}
Let $K$ be an algebraically closed field of characteristic 2 and let $c, d \in K$. Define $R(k, c, d) = K \langle X,Y \rangle /I$, where $I$ is an ideal in $K \langle X,Y \rangle$ spanned by $X^2+Y(XY)^{k-1} + c(XY)^k, Y^2+X(YX)^{k-1} + d(XY)^k, X(YX)^k, Y(XY)^k$. \par
It can be easily seen that $(XY)^k + (YX)^k \in I$. Now let $B$ be the standard basis of $R=R(k, c, d)$: we recall that $B = \{1, x(yx)^i, y(xy)^i, (xy)^i, (yx)^i\}_{i=0}^{k-1} \cup \{(xy)^k\}$ (see \cite{GA}). So the set $B_1 = \{u \otimes v \mid u, v \in B\}$ is the basis for the enveloping algebra $\Lambda = R \otimes R^{op}$.\par
Algebras of the family $R(k,c,d)$ are symmetric, since there exists a non-degenerate symmetric bilinear form
$$\langle b_1, b_2\rangle = \begin{cases}
 1, & b_1 b_2 \in Soc(R) \\
 0, & \text{otherwise.}
\end{cases}$$
In order to describe a structure of graded Lie algebra one should only know how $\Delta$ acts on $HH^*(A)$. In this article we are interested in the case $c=0$, so consider only $R(k,0,d)$: we often write $R$ for this algebra. \par
Observe that the right multiplication by $\lambda \in \Lambda$ induces an endomorphism $\lambda^{*}$ of the left $\Lambda$-module $\Lambda$. Also we will use an endomorphism of the right $\Lambda$-module $\Lambda$, induced by the left multiplication on $\lambda$, which we denote by ${}^{*}\lambda$. \par
For computations we construct 4-periodic complex in the category of (left) $\Lambda$-modules
$$
\CD
P_0 @<{d_0}<< P_1 @<{d_1}<<  P_2 @<{d_2}<< P_3  @<{d_3}<< P_4 @<{d_4}<< \dots\\
\endCD$$
where $P_0=P_3=\Lambda$, $P_1 = P_2 = \Lambda^2$ and differentials given by formulae
$$d_0 = \begin{pmatrix}
x \otimes 1 + 1\otimes x & y \otimes 1 + 1 \otimes y
\end{pmatrix}, 
\quad d_1=\begin{pmatrix}
d_{11}& d_{12} \\
d_{13} & d_{14}
\end{pmatrix},$$
$$d_2 = \begin{pmatrix}
x \otimes 1 + 1\otimes x \\
y \otimes 1 + 1\otimes y + dy\otimes y + 1 \otimes dx(yx)^{k-1}+d^2y\otimes x(yx)^{k-1}\\
\end{pmatrix}, \quad d_3 =  \lambda^{*},$$  
where
$$\begin{cases} 
d_{11} = x \otimes 1 + 1\otimes x + \sum \limits_{0}^{k-2} y(xy)^i \otimes y(xy)^{k-2-i}, \\
d_{12} = \sum \limits_{0}^{k-1}(xy)^i \otimes (yx)^{k-1-i} + d \sum \limits_{0}^{k-1} (xy)^i \otimes y(xy)^{k-1-i}, \\
d_{13} = \sum \limits_{0}^{k-1}(yx)^i \otimes (xy)^{k-1-i},\\
d_{14} = y \otimes 1 + 1\otimes y + \sum \limits_{0}^{k-2} x(yx)^i \otimes x(yx)^{k-2-i} + d \sum \limits_{0}^{k-1} x(yx)^i \otimes (xy)^{k-i-1},
\end{cases} $$
and
$$ \lambda = \sum \limits_{0}^{k}(xy)^i \otimes (xy)^{k-i} + \sum \limits_{1}^{k-1} (yx)^i \otimes (yx)^{k-i} +\sum \limits_{0}^{k-1}y(xy)^i \otimes x(yx)^{k-i-1} +$$ $$+\sum \limits_{0}^{k-1}x(yx)^i \otimes y(xy)^{k-i-1} + dx(yx)^{k-1} \otimes x(yx)^{k-1}.$$

Also consider the map $\mu:\Lambda \longrightarrow R$ induced by multiplication: $\mu(a \otimes b) = ab$.
\begin{theorem}[Proposition 3.1 in \cite{GA}]
The complex $P_{\bullet}$ equipped with the map $\mu$ forms the minimal $\Lambda$-projective resolution of $R$.
\end{theorem}

It is useful to rewrite the resolution as in \cite{IIVZ}: using the quiver $Q_1$ of $R$ one could define modules $KQ_1 = \langle x,y\rangle$, and $KQ_1^* = \langle r_x,r_y\rangle$, where $r_x = x^2+y(xy)^{k-1}$, $r_y = y^2 + x(yx)^{k-1}+d(xy)^k$. It is easy to see that
$$R \otimes KQ_1 \otimes R = R \otimes \langle x \rangle \otimes R \oplus R \otimes \langle y \rangle \otimes R \simeq R \otimes R^{op} \oplus R \otimes R^{op} = \Lambda \oplus \Lambda,$$ 
so we can represent the resolution $P_{\bullet}$ in the following form:
$$
\CD
R @<{\mu}<< R \otimes R @<{d_0}<< R\otimes KQ_1 \otimes R @<{d_1}<< R\otimes KQ_1^* \otimes R  @<{d_2}<< R\otimes R @<{d_3}<< \dots
\endCD$$
where $P_{n+4} = P_n$ for $n \in \mathbb{N}$. The differentials are given by formulae 
\begin{itemize}
    \item $d_0(1 \otimes x \otimes 1) = x \otimes 1 + 1 \otimes x$, $d_0(1 \otimes y \otimes 1) = y \otimes 1 + 1 \otimes y$;
    \item $d_1(1 \otimes r_x \otimes 1) = 1\otimes x \otimes x + x \otimes x \otimes 1 + \sum \limits_{i=0}^{k-2} y(xy)^i \otimes x \otimes y(xy)^{k-2-i} + \sum \limits_{i=0}^{k-1} (yx)^i \otimes y \otimes (xy)^{k-1-i}$, \\
    $d_1(1 \otimes r_y \otimes 1) = 1\otimes y \otimes y + y \otimes y \otimes 1 + \sum \limits_{i=0}^{k-2} x(yx)^i \otimes y \otimes x(yx)^{k-2-i} + d\sum \limits_{i=0}^{k-1} x(yx)^i \otimes y \otimes (xy)^{k-1-i} + \sum \limits_{i=0}^{k-1} (xy)^i \otimes x \otimes (yx)^{k-1-i} + d\sum \limits_{i=0}^{k-1} (xy)^i \otimes x \otimes y(xy)^{k-1-i}$;
    \item $d_2(1\otimes 1) = x \otimes r_x \otimes 1 + 1\otimes r_x \otimes x+ y\otimes r_y \otimes 1 + 1\otimes r_y \otimes y + d y\otimes r_y \otimes y + d \otimes r_y \otimes x(yx)^{k-1} + d^2 y \otimes r_y \otimes x(yx)^{k-1}$;
    \item $d_3 = \rho \mu$, where $\rho (1) = \sum \limits_{b \in B} b^*\otimes b + d x(yx)^{k-1} \otimes x(yx)^{k-1}$ and $\mu : R \otimes R \longrightarrow R$ is a multiplication map.
\end{itemize}
\subsection{Construction}
\begin{definition}
For the complex 
$$\CD
0 @<{}<< N @<{d_0}<< Q_0 @<{d_1}<< Q_1  @<{d_2}<< Q_2 \dots
\endCD$$
we define {\it the weak self-homotopy} as a collection of $K$-homomorphisms $t_{n+1} : Q_n \longrightarrow Q_{n+1} $ and $t_{0} : N \longrightarrow Q_0$ such that $t_{n}d_n + d_{n+1}t_{n+1} = id_{Q_n}$ for all $n \geqslant 0$ and $d_0t_{0} = id_N$.
\end{definition}

Now we need to construct a weak self-homotopy $\{t_i :P_i \longrightarrow P_{i+1} \}_{i \geqslant -1}$ for such projective resolution, as in \cite{BZZ} (here $P_{-1} = R$). 
In order to do this let us define bimodule derivation $C : KQ \longrightarrow KQ \otimes KQ_1 \otimes KQ$ by sending the path $\alpha_1...\alpha_n$ to $\sum \limits_{i=1}^n \alpha_1 ... \alpha_{i-1} \otimes \alpha_i \otimes \alpha_{i+1} \dots \alpha_n$, and consider induced map $C: R \longrightarrow R \otimes KQ_1 \otimes R$. 
So one could define $t_{-1} (1) = 1\otimes 1$ and $t_0 (b \otimes 1) = C(b)$ for $b \in B$. Now construct $t_1 : P_1 \longrightarrow P_2$ by the following rules: for $b\in B$ let
$$t_1(b \otimes x \ot 1) = $$

$$ = \begin{cases} 
0, & bx \in B\setminus\{y(xy)^{k-1}\}\\

1 \ot r_x \ot 1, & b=x\\

1 \x r_x \x x^2 + x \x r_x \x x + x^2 \x r_x \x 1 + \\ (yx)^{k-1} \x r_y \x (xy)^{k-1}, & b = (xy)^k\\

T\big(y \x r_x \x 1 + (xy)^{k-1} \x r_x \x y(xy)^{k-2} + 1 \x r_y \x (xy)^{k-1} \\ +dx \x r_x \x (xy)^{k-1}  + dy(xy)^{k-1}\x r_x \x y(xy)^{k-2}\big), & b = Tyx\\

y \x r_y \x 1+1 \x r_y \x y + dy \x r_y \x y \\+ d\x r_y \x x(yx)^{k-1} + d^2 y\x r_y \x x(yx)^{k-1}, & b = y(xy)^{k-1}
\end{cases}
$$
and let
$$t_1(b \otimes y \ot 1) = $$
$$ = \begin{cases} 
0, & by \in B \\

1 \ot r_y \ot 1, & b=y\\

T\big(x \x r_y \x 1 + (yx)^{k-1} \x r_y \x x(yx)^{k-2} + 1 \x r_x \x (yx)^{k-1}\\
+ d \x r_x \x y(xy)^{k-1} + d(yx)^{k-1} \x r_y \x (xy)^{k-1} \big), & b = Txy.\\
\end{cases}
$$

In order to define $t_2:P_2 \longrightarrow P_3$ one can put
\begin{itemize}
   \item $t_2(x \x r_x \x 1) = 1 \x 1$,
   \item $t_2 (xy \x r_x \x 1) =  d y(xy)^{k-1} \x y(xy)^{k-2}$,
   \item $t_2(y(xy)^{k-1} \x r_x \x 1) =  1\x x$,
   \item $t_2((xy)^k \x r_x \x 1) = \sum \limits_{i=0}^{k-1} ((yx)^i \x y(xy)^{k-i-1} + y(xy)^i \x (xy)^{k-i-1}) +dx(yx)^{k-1} \x (xy)^{k-1}$,
   \item $t_2((yx)^i \x r_x \x 1) =  \sum \limits_{j=1}^{i-1} (yx)^j \x y(xy)^{i-j-1} +\sum \limits_{j=1}^{i} y(xy)^{j-1} \x (xy)^{i-j} + dx(yx)^{k-1} \x (xy)^{i-1}$ for $i>1$,
   \item $t_2(yx \x r_x \x 1) = y \x 1 + dx(yx)^{k-1} \x 1  + d(xy)^{k-1} \x x + \sum \limits_{i=0}^{k-2} dx(yx)^i \x (yx)^{k-i-1} + \sum \limits_{i=1}^{k-2} d(xy)^i \x x(yx)^{k-i-1}$,
   \item $t_2 (x(yx)^i \x r_x \x 1) =  \sum \limits_{j=1}^{i}( (xy)^j \x (xy)^{i-j} + x(yx)^{j-1} \x y(xy)^{i-j})  + \delta_{i,1} dy(xy)^{k-1} \x (yx)^{k-1},$
   \item $t_2(b \x r_x \x 1)=0$ otherwise;
\end{itemize}
and let
 \begin{itemize}
    \item $t_2((xy)^i \x r_y \x 1) =  \sum \limits_{j=1}^{i-1} (xy)^j \x x(yx)^{i-j-1} +\sum \limits_{j=1}^{i} x(yx)^{j-1} \x (yx)^{i-j} + d\sum \limits_{j=1}^{i} x(yx)^{j-1} \x y(xy)^{i-j} + d\sum \limits_{j=1}^{i-1} (xy)^j \x (xy)^{i-j}$ for $i>1$,
    \item $t_2(xy \x r_y \x 1) = x \x 1 + dx \x y$,
    \item $t_2(y(xy)^i \x r_y \x 1) = \sum \limits_{j=1}^i ((yx)^j \x (yx)^{i-j} + y(xy)^{j-1} \x x(yx)^{i-j} ) + d \sum \limits_{j=1}^i ((yx)^j \x y(xy)^{i-j} + y(xy)^{j-1} \x (xy)^{i-j+1} ) +d^2 x(yx)^{k-1}\x (xy)^i + dx(yx)^{k-1} \x x(yx)^{i-1}+ \delta_{i,1} \cdot d (xy)^{k-1}\x y(xy)^{k-1}$ for $1 \leq i \leq k-1$,
    \item $t_2(b \x r_y \x 1) = 0$ otherwise.
\end{itemize} 
Finally, let $t_3 : R \x R \longrightarrow R \x R$ be the map defined by rules $t_3((xy)^k \x 1) = 1 \x 1$ and $t_3(b \x 1)=0$ elsewhere. It remains to put $t_{n+4} = t_n$ for any $n \geqslant 4$.\par 
\begin{theorem}
The above-defined family of maps $\{t_i : P_{i} \longrightarrow P_{i+1}\}_{i=0}^{+\infty}$ together with $t_{-1}: R \longrightarrow P_0$ forms a weak self-homotopy for the resolution $P_{\bullet}$.
\end{theorem}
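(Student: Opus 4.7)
My plan is to verify the weak self-homotopy identities $\mu\, t_{-1} = \mathrm{id}_R$ and $d\, t_{i+1} + t_i\, d = \mathrm{id}_{P_i}$ by direct computation. Since all the $t_i$ may be (and should be) chosen right $R$-linear, it suffices to check these identities on right $R$-generators of each $P_i$, i.e.\ on the elements $b \otimes q \otimes 1$ with $b$ ranging over the standard basis $B$ of $R$ and $q$ over the appropriate middle generator ($1$, $x$, $y$, $r_x$ or $r_y$, depending on the degree). The $4$-periodicity of the resolution ($P_{n+4} = P_n$, $d_{n+4} = d_n$, $t_{n+4} = t_n$) reduces the task to the five homological degrees $-1, 0, 1, 2, 3$.

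Degrees $-1$ and $0$ are essentially immediate: for a monomial $b = \alpha_1 \cdots \alpha_n$ in the letters $\{x, y\}$, applying $d$ to $C(b) = \sum_i \alpha_1 \cdots \alpha_{i-1} \otimes \alpha_i \otimes \alpha_{i+1} \cdots \alpha_n$ telescopes (in characteristic $2$) to $b \otimes 1 + 1 \otimes b$, and extending $t_{-1}$ right $R$-linearly by $t_{-1}(r) = 1 \otimes r$ absorbs the remainder. In degree $1$, for each pair $(b, \alpha)$ with $\alpha \in \{x, y\}$ the generic case $b\alpha \in B$ forces $t_1(b \otimes \alpha \otimes 1) = 0$, so the identity reduces to locating $b \otimes \alpha \otimes 1$ inside $t_0\, d(b \otimes \alpha \otimes 1)$; the exceptional entries in the definition of $t_1$ (those for $b = x$, $b = y$, $b = (xy)^k$, $b = Tyx$, $b = y(xy)^{k-1}$, and $b = Txy$) are precisely calibrated to cancel the summands in which a factor of $d$ leaves $B$ and forces the relators $r_x$ or $r_y$ to appear. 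Degree $3$ is equally short, since $P_3 \cong \Lambda$ is generated by a single element, $d_3 = \rho\mu$, and $t_3$ is supported only on $(xy)^k \otimes 1$; one direct computation using $\rho$ finishes this case.

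Degree $2$ is the bulk of the work. For each $b \in B$ and each $\alpha \in \{r_x, r_y\}$ I would split by the four monomial families $(xy)^i$, $(yx)^i$, $x(yx)^i$, $y(xy)^i$ and treat the boundary indices $i = 0, 1, k-1, k$ separately; these are exactly the places where the socle relations $x(yx)^k = y(xy)^k = 0$ and the $d$-, $d^2$-correction terms in $d_2$ come into play. The main obstacle of the proof lies here: the defining formulas for $t_2$ consist of several indexed sums with index-dependent coefficients, and matching them term by term against the output of $t_1\, d_2$ (which itself expands through the four entries $d_{11}, d_{12}, d_{13}, d_{14}$ of the matrix differential of the first resolution, transported to the second) requires meticulous index bookkeeping and repeated invocation of the socle-vanishing relations. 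No conceptual difficulty is expected beyond this extensive but essentially mechanical case analysis, after which the periodicity clause closes the induction for $i \ge 4$.
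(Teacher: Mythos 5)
Your proposal is correct and follows essentially the same route as the paper: the paper's own proof simply asserts that the identities $d\,t_{i+1}+t_i\,d=\mathrm{id}$ are verified directly from the definitions of $t_n$ for small $n$ and then extended by the $4$-periodicity $t_{n+4}=t_n$, which is exactly the reduction (to right $R$-generators in degrees $-1$ through $3$, with the degree-$2$ case carrying the bulk of the bookkeeping) that you describe in more detail.
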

\begin{proof}
For any $n \in \mathbb{N}$ it remains to verify a commutativity of required diagrams, which is straight-up obvious from definitions of $t_n$ for $n \leq 4$ and from periodicity for $n \geqslant 5$.
\end{proof}

\section{Comparison morphisms}
Consider the normalized bar-resolution $ \overline{{Bar}}_{\bullet} (R) = R \otimes  \overline{{R}}^{\x \bullet} \otimes R$, where $ \overline{R} = R / (k\cdot 1_R)$. We now need to construct a comparison morphisms between $P_{\bullet}$ and $\overline{Bar_{\bullet}} (R)$
$$\Phi : P_{\bullet} \longrightarrow  \overline{{Bar}}_{\bullet}(R) \text{ and } \Psi :  \overline{{Bar}}_{\bullet}(R) \longrightarrow  P_{\bullet}.$$
It is easy to check that there exists a weak self-homotopy defined by the formula $s_n(a_0 \x ... \x a_n \x 1) = 1 \x a_0 \x ... \x a_n \x 1$ over $ \overline{{Bar}}_{\bullet} (R)$, so one can put $\Phi_n = s_{n-1} \Phi_{n-1} d^P_{n-1}$ and $\Phi_0 = id_{R \x R}$.
\begin{lemma}
Let $\Psi :  \overline{{Bar}}_{\bullet}(R) \longrightarrow  P_{\bullet}$ be the chain map constructed using $t_{\bullet}$. Then for any $n \in \mathbb{N}$ and any $a_i \in R$ the following formula holds:
$$\Psi_n (1 \x a_1 \x ... \x a_n \x 1) = t_{n-1} (a_1 \Psi_{n-1} (1 \x a_2 \x ... \x a_n \x 1)).$$
\end{lemma}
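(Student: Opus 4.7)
I would proceed by induction on $n$, using that the comparison map $\Psi$ defined via $t_\bullet$ is given inductively by $\Psi_n = t_{n-1} \circ \Psi_{n-1} \circ d_n^{\overline{Bar}}$, in parallel with the recipe $\Phi_n = s_{n-1}\Phi_{n-1}d_n^P$ stated just before the lemma, with $\Psi_0$ the natural identification $\overline{Bar}_0 = R \otimes R = P_0$.

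For the base case $n = 1$: expanding $d_1^{\overline{Bar}}(1 \otimes a_1 \otimes 1) = a_1 \otimes 1 - 1 \otimes a_1$ and applying $\Psi_0 = \mathrm{id}$ followed by $t_0$ yields $t_0(a_1 \otimes 1) - t_0(1 \otimes a_1) = C(a_1) - C(1)\cdot a_1 = C(a_1)$, using right $R$-linearity of $t_0$ and $C(1) = 0$. This coincides with the right-hand side $t_0(a_1\Psi_0(1 \otimes 1)) = t_0(a_1 \otimes 1) = C(a_1)$, so the base case holds.

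For the inductive step, expand $d_n^{\overline{Bar}}(1 \otimes a_1 \otimes \ldots \otimes a_n \otimes 1)$ as the leading face $a_1 \cdot (1 \otimes a_2 \otimes \ldots \otimes a_n \otimes 1)$, plus the $n - 1$ inner faces $(-1)^i(1 \otimes \ldots \otimes a_i a_{i+1} \otimes \ldots \otimes 1)$, plus the last face $(-1)^n(1 \otimes a_1 \otimes \ldots \otimes a_{n-1} \otimes 1) \cdot a_n$. Bimodule linearity of $\Psi_{n-1}$ turns the leading-face contribution into precisely $a_1\Psi_{n-1}(1 \otimes a_2 \otimes \ldots \otimes a_n \otimes 1)$, which is exactly the argument of $t_{n-1}$ in the claim. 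By the inductive hypothesis applied to each remaining face term, each such contribution lies in the image of $t_{n-2}$ (the last-face contribution landing there via right $R$-linearity of $t_{n-2}$).

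The main obstacle is then to show that these remaining contributions vanish after applying $t_{n-1}$, i.e.\ that $t_{n-1} \circ t_{n-2} = 0$ on the relevant subset of $P_{n-2}$'s image. I would verify this by a case-by-case computation against the explicit piecewise definitions of $t_1, t_2, t_3$ from Section 3 on the monomial basis $B$ of $R$, invoking both the $4$-periodicity $t_{n+4} = t_n$ and the normalization $\overline{R} = R/\langle 1_R\rangle$, which prunes many would-be contributions and forces the two-step composition to land in a piece on which $t_\bullet$ is trivial. This case analysis is the computational heart of the argument.
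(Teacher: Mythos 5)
Your proposal diverges from the paper's treatment: the paper disposes of this lemma by citing Lemma~2.5 of Ivanov--Ivanov--Volkov--Zhou, where the recursive formula $\Psi_n(1 \otimes a_1 \otimes \dots \otimes a_n \otimes 1) = t_{n-1}\bigl(a_1 \Psi_{n-1}(1 \otimes a_2 \otimes \dots \otimes a_n \otimes 1)\bigr)$ is essentially taken as the \emph{definition} of the bimodule map $\Psi_n$ on the generators of $\overline{Bar}_n$, and the actual content is that this map commutes with the differentials. That verification uses only the weak self-homotopy identity $d_{n}t_{n} + t_{n-1}d_{n-1} = \mathrm{id}$; no further properties of $t_\bullet$ are required. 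You instead take $\Psi_n := t_{n-1}\Psi_{n-1}d_n^{\overline{Bar}}$ as the definition and try to show it collapses to the iterated formula, which is a legitimate alternative reading of ``defined via $t_\bullet$'' but shifts all the difficulty onto a different claim.

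That shifted claim is where the gap lies. After peeling off the leading face, you need every remaining face contribution to die under $t_{n-1}$, and you correctly identify that this amounts to $t_{n-1}\circ t_{n-2} = 0$ on the relevant elements (together with right $R$-linearity of the $t_i$, which you use for the last face and should also justify, since the $t_i$ are certainly not bimodule maps). But $t\circ t = 0$ is \emph{not} a formal consequence of being a weak self-homotopy --- it is an extra ``speciality'' condition that a contracting homotopy may or may not satisfy --- and you neither prove it for the explicit $t_0,\dots,t_3$ of Section~3 nor cite it; you defer it as ``the computational heart of the argument'' without carrying it out. Until that computation is done (for all four degrees, compatibly with the periodicity $t_{n+4}=t_n$, which also silently requires checking the gluing degrees $t_4 t_3$ and $t_5 t_4$), the proof is incomplete, and it is strictly harder than the standard route: the direct check that the recursively defined $\Psi$ is a chain map needs only $dt+td=\mathrm{id}$ and avoids $t^2=0$ entirely. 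I would either switch to that argument or cite the quoted lemma, as the paper does.
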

\begin{proof}
It follows from Lemma 2.5 of \cite{IIVZ}.
\end{proof}
Let us, for example, directly compute first items of $\Phi_{\bullet}$:
\begin{enumerate}
    \item The map $\Phi_1$ is induced by embedding $R \x kQ_1 \x R \longrightarrow R \otimes  \overline{R} \otimes R$,
    \item $\Phi_2 (1 \x r_x \x 1) = 1 \x x \x x \x 1+ \sum \limits_{i=0}^{k-2} 1 \x y(xy)^i \x x \x y(xy)^{k-i-2} + \sum \limits_{i=1}^{k-1} 1 \x (yx)^i \x y \x (xy)^{k-i-1}$,\\
    
$\Phi_2( 1 \x r_y \x 1) =  1 \x y \x y \x 1+ \sum \limits_{i=0}^{k-2} 1 \x x(yx)^i \x y \x x(yx)^{k-i-2} + d \sum \limits_{i=0}^{k-1} 1 \x x(yx)^i \x y \x (xy)^{k-i-1} + \sum \limits_{i=1}^{k-1} 1 \x (xy)^i \x x \x (yx)^{k-i-1} + d \sum \limits_{i=1}^{k-1} 1 \x (xy)^i \x x \x y(xy)^{k-i-1}$,

\item $\Phi_3(1\x 1) = 1 \x x \x x \x x \x 1 + \sum \limits_{i=0}^{k-2} 1 \x x \x y(xy)^i \x x \x y(xy)^{k-i-2} + \sum \limits_{i=1}^{k-1} 1 \x x \x (yx)^i \x y \x (xy)^{k-i-1} + 1 \x y \x y \x y \x 1+ d \x y \x x(yx)^{k-1} \x y \x 1+ \sum \limits_{i=0}^{k-2} 1 \x y \x x(yx)^i \x y \x x(yx)^{k-i-2} + \sum \limits_{i=1}^{k-1} 1 \x y \x (xy)^i \x x \x (yx)^{k-i-1} + d \x y \x y \x y \x y +d^2 \x y \x x(yx)^{k-1} \x y \x y + d^2 \x y \x y \x y \x x(yx)^{k-1} + d^3 \x y \x x(yx)^{k-1} \x y \x x(yx)^{k-1}$,

\item $\Phi_4(1 \x 1) = \sum_{b \in B} 1 \x b \Phi_3(1 \x 1) b^* + d \x x(yx)^{k-1} \Phi_3(1\x 1) x(yx)^{k-1}$.
\end{enumerate}
Also it is easy to see that $\Psi_0 = id_{R \x R}$, $\Psi_1(1 \x b \x 1) = C(b)$, $\Psi_2(1 \x a_1 \x a_2 \x 1) = t_1 \big(a_1 C(a_2) \big)$, and so on: we only interested in recursive structure like in Lemma 1.\par 
In order to obtain $BV$-structure on the Hochschild cohomology one needs to compute $\Delta : HH^n(R) \longrightarrow HH^{n-1}(R)$. By the Poisson rule 
$$[a \smile b, c] = [a,c] \smile b + (-1)^{|a| (|c|-1)}(a \smile [b,c]),$$
so, because char$K = 2$ we obtain
$$\Delta(abc) = \Delta(ab)c + \Delta(ac)b + \Delta(bc)a + \Delta(a)bc +\Delta(b)ac + \Delta(c)ab,$$
hence we only need to know $\Delta$ on generating elements of $HH^*(R)$ and also on the cup-products of such elements. Also for $\alpha \in HH^n(R)$ there exists cocycle $f \in \Hom (P_n,R)$ such that the following formula holds: $\Delta(\alpha) = \Delta(f \Psi_n) \Phi_{n-1}$. So
$$\Delta(\alpha) (a_1 \x ... \x a_{n-1}) = \sum_{b \in B\setminus \{1\}} \langle \sum \limits_{i=1}^n (-1)^{i(n-1)} \alpha(a_i \x ... \x a_{n-1} \x b \x a_1 \x ...\x a_{i-1}), 1\rangle b^*,$$
where $\langle b,c \rangle$ is the bilinear form defined above.

\section{$BV$-structure}
For an algebraically closed field $K$ of characteristic 2 consider 
$$\mathcal{X} = \{p_1, p_2,p_3,p_4,q_1,q_2,w_1,w_2,w_3,e\},$$ where 
$$
 |p_1| = |p_2| = |p_3| = |p_4| = 0, \ |q_1| = |q_2| = 1, \ |w_1| = |w_2| =|w_3|=2, \ |e|= 4
$$
and ideal $\mathcal{I}$ in $K[\mathcal{X}]$ spanned by elements
\begin{itemize}
    \item of degree 0: $p_1^k, p_2^2, p_3^2, p_4^2$ and $p_i p_j$ for $i \not=j$;
    \item of degree 1: $p_3q_1+p_2q_2$, $p_1^{k-1}q_1+dp_3q_1$, $p_1q_2+p_2q_1, p_1^2q_2$;
    \item of degree 2: $q_1q_2$, $p_1^{k-1}w_3+dp_2w_2$, $p_2w_1, p_4w_1, p_3w_2, p_4w_2, p_4w_3$, $p_2q_1^2, p_3q_2^2$, $p_1w_1+p_2w_2, p_1w_1+p_3w_3, p_1w_1 + p_4q_1^2$, $p_3w_1+p_1w_2, p_3w_1+p_2w_3, p_3w_1+p_4q_2^2$
    \item of degree 3: $q_1w_1+q_2w_2$, $q_1^3+q_2^3+{{d^3(k+2)} \over {2}}p_1q_1w_1$, $p_3q_2w_1+p_1q_2w_2$, $p_3q_2w_1+p_2q_2w_3$, $p_1^{k-2}q_1w_3+dq_2w_2$, $p_1^{k-2} q_2w_3$, $p_1q_2w_1$, $p_1q_2w_3$, $q_1w_2+q_2w_3$;
    \item of degree 4: $w_3^2+p_1^2e,$ $q_2^2w_1$, $q_1^2w_3$, $q_2^2w_1$, $q_2^2w_2$, $w_1^2$, $w_2^2$, $w_iw_j$ for $i \not=j$.
    \end{itemize}
\begin{theorem}[Theorem 2.1 in \cite{GA}]
$HH^*(R) \simeq \mathcal{A} = K[\mathcal{X}]/\mathcal{I}$.
\end{theorem}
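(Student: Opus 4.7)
The plan is to exhibit the algebra isomorphism $\varphi: K[\mathcal{X}]/\mathcal{I} \to HH^*(R)$ explicitly. First, compute $HH^n(R)$ as a $K$-vector space in each cohomological degree using the 4-periodic resolution $P_\bullet$ from Theorem 3.1. Applying $\text{Hom}_\Lambda(-, R)$ and using $\text{Hom}_\Lambda(\Lambda, R) \simeq R$, the cochain complex becomes $R \to R^2 \to R^2 \to R \to R \to \cdots$ with differentials $\delta^n$ given by explicit matrices (transposes of $d_n$). Kernel/image computation in each residue class modulo $4$, under the standing hypotheses $\text{char}\, K = 2$ and $k$ even, yields the dimension of each $HH^n(R)$ and a preferred $K$-basis of cocycles in each degree.

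Second, for each generator in $\mathcal{X}$ exhibit an explicit cocycle representative: the $p_i$ as central elements in $HH^0(R) = Z(R)$; $q_1, q_2$ as $2$-component maps $P_1 = \Lambda^2 \to R$; $w_1, w_2, w_3$ as $2$-component maps $P_2 = \Lambda^2 \to R$; and $e$ as a socle generator in the appropriate periodic degree. After verifying each is a cocycle and nontrivial modulo coboundaries, show $\varphi$ is a well-defined algebra map by checking that every listed relation in $\mathcal{I}$ holds among the corresponding cohomology classes. The key input is the cup product: for $\alpha \in \text{Hom}_\Lambda(P_n, R)$ and $\beta \in \text{Hom}_\Lambda(P_m, R)$, I would compute $\alpha \smile \beta$ via the comparison morphisms from Section 4, namely as $\bigl((\alpha \circ \Psi_n) \smile (\beta \circ \Psi_m)\bigr) \circ \Phi_{n+m}$, where on the bar side the cup product is simple concatenation of tensors followed by multiplication in $R$. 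Surjectivity of $\varphi$ follows because the chosen generators visibly span each $HH^n(R)$; injectivity follows by comparing dimensions degree by degree against the vector-space computation of step one.

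The principal difficulty is the calculational bulk: the comparison morphisms $\Phi_3, \Phi_4$ and the self-homotopy $t_\bullet$ produce long sums indexed by the basis $B$ of $R$, and each degree-$3$ or degree-$4$ relation requires evaluating such a sum on specific tensors and collecting terms modulo the ideal defining $R$. The relation $q_1^3 + q_2^3 + \tfrac{d^3(k+2)}{2} p_1 q_1 w_1 = 0$ is especially delicate, because although we work in characteristic $2$, the coefficient $(k+2)/2$ is an integer whose parity depends on $k$, and its contribution must be tracked carefully in the even-dimension regime that distinguishes the present paper from the odd-dimensional case. Once these calculations are completed and the remaining relations dispatched analogously, the dimension count of step one closes the argument.
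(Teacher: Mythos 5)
The paper does not actually prove this statement: it is imported wholesale as Theorem~2.1 of the author's joint work with Generalov \cite{10}, and the present article treats the presentation $HH^*(R)\simeq K[\mathcal{X}]/\mathcal{I}$ as an external input. So there is no internal proof to measure yours against; what can be said is that your outline is essentially the strategy that \cite{10} (and the surrounding literature on algebras of quaternion type) follows, and that the present paper presupposes: apply $Hom_{\Lambda}(-,R)$ to the minimal $4$-periodic resolution to get the additive structure, choose explicit cocycle representatives for the generators (the paper records exactly these, e.g.\ $q_1=(y(xy)^{k-2},\,1+dy)$, $w_1=(x,0)$, $e=1$), compute products via the comparison morphisms $\Phi_\bullet$, $\Psi_\bullet$ built from the weak self-homotopy $t_\bullet$, and close with a dimension count. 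Two points in your sketch need repair. First, $e$ is not a socle generator: it is the class of the map $P_4=R\otimes R\to R$, $1\otimes 1\mapsto 1$, i.e.\ the periodicity element, and its being represented by $1$ (so that cup product with $e$ realizes $HH^n(R)\simeq HH^{n+4}(R)$) is what makes the higher-degree structure finite to describe. Second, your final step ``injectivity by comparing dimensions'' silently assumes you can compute $\dim_K\bigl(K[\mathcal{X}]/\mathcal{I}\bigr)_n$ for every $n$, which requires exhibiting a monomial basis of the quotient (a diamond-lemma or Gr\"obner-basis argument for the roughly forty listed relations); without that, surjectivity of $\varphi$ plus knowledge of $\dim_K HH^n(R)$ does not yet yield injectivity. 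With those caveats the plan is the right one, but as written it is a program rather than a proof: all of the substance lives in the deferred evaluations of $t_\bullet$, $\Phi_\bullet$, $\Psi_\bullet$ and in the verification of the relations, which is precisely the content of \cite{10}.
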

For computations one needs to know a simple form of these elements. Let $P$ be an item of minimal projective resolution  $R$. If $P= R \x R$, then denote by $f$ the homomorphism in  $\Hom_{R^e}(P,R)$ which sends $1 \x 1$ to $f$. If $P= R \x KQ\x R$ (or $P= R \x KQ_1\x R$), then denote by $(f,g)$ the homomorphism which sends $1\x x \x 1$ (or $1 \x r_x \x 1$) to $f$ and $1 \x y \x 1$ (or $1 \x r_y \x 1$) to $g$. So one can rewrite the generating elements like in \cite{GA}.
$$\begin{cases}
\text{elements of degree 0:} & p_1 = xy+yx, \ p_2 = x(yx)^{k-1}, \ p_3 = y(xy)^{k-1}, \ p_4 = (xy)^k, \\
\text{elements of degree 1:} & q_1 = ( y(xy)^{k-2}, \ 1+dy ),\ q_2 =(1, \  d(xy)^{k-1} +x(yx)^{k-2}), \\
\text{elements of degree 2:} & w_1 = ( x, \  0),\ w_2 = (0, \ y ),\ w_3= ( y,\ x + dxy ),\\
\text{elements of degree 4:} & e=1.\end{cases}$$
\begin{remark}
Also note that
$$C(b) = \begin{cases} \sum \limits_{j=0}^{i-1} (xy)^j \x x \x y(xy)^{i-j-1} + \sum \limits_{j=0}^{i-1} x(yx)^j \x y \x (xy)^{i-j-1}, & b = (xy)^i \\
 \sum \limits_{j=0}^{i} (xy)^j \x x \x (yx)^{i-j} + \sum \limits_{j=0}^{i-1} x(yx)^j \x y \x x(yx)^{i-j-1}, & b = x(yx)^i \\ 
  \sum \limits_{j=0}^{i} (yx)^j \x y \x (xy)^{i-j} + \sum \limits_{j=0}^{i-1} y(xy)^j \x x \x y(xy)^{i-j-1}, & b = y(xy)^i\\
   \sum \limits_{j=0}^{i-1} (yx)^j \x y \x x(yx)^{i-j-1} + \sum \limits_{j=0}^{i-1} y(xy)^j \x x \x (yx)^{i-j-1}, & b = (yx)^i.
 
 \end{cases}$$
\end{remark}

\subsection{Low degree cases}
Obviously $\Delta$ is equal to zero on each combination of elements of degree zero, because it is a morphism of degree $-1$.
\begin{lemma} For elements of degree 1 in $HH^*(R)$ the following statements hold:
$\Delta(q_1) = \Delta(q_2)= \Delta(p_3q_2) = 0$, $\Delta(p_1q_1) = dp_1$, $\Delta(p_1q_2) = \Delta(p_2q_1)= dp_2$, $\Delta(p_4q_1) = p_2$, $\Delta(p_3q_1) = \Delta(p_2q_2)= p_1^{k-1}$, $\Delta(p_4q_2) = p_3$.
\end{lemma}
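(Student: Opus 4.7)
The plan is to apply, element by element, the description of $\Delta$ given at the end of Section~3. For $\alpha \in HH^1(R)$ represented by a cocycle $f_\alpha \in \mathrm{Hom}_{R^e}(P_1, R)$, that formula with $n=1$, combined with $\Phi_0 = \mathrm{id}$ and $\Psi_1(1 \otimes b \otimes 1) = C(b)$, reduces to
\[
\Delta(\alpha) \;=\; \sum_{b \in B \setminus \{1\}} \langle f_\alpha(C(b)),\, 1\rangle \, b^* \;\in\; Z(R) = HH^0(R),
\]
where $\{b^*\}$ is the Nakayama dual basis to $B$ and $\langle u, 1\rangle$ extracts the coefficient of the socle generator $(xy)^k$ in $u$.

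For the generators $q_1$ and $q_2$ I use the cocycle representatives from Theorem~5. For each product $p_i q_j$, since $p_i \in HH^0(R) = Z(R)$ acts on $HH^1(R)$ by multiplication in the target, a cocycle for $p_i \smile q_j$ on $P_1$ is $z \mapsto p_i \cdot f_{q_j}(z)$, and so
\[
\Delta(p_i q_j) \;=\; \sum_{b \in B \setminus \{1\}} \langle p_i \cdot f_{q_j}(C(b)),\, 1\rangle \, b^*.
\]
For each of the ten classes I loop over the basis $B$: I compute $C(b)$ from the derivation rule (equivalently, from the explicit formulas recorded in the preceding Remark), evaluate $f_{q_j}$ on each elementary tensor $u \otimes \xi \otimes v$ in $C(b)$, multiply by $p_i$ if present, and reduce the result to the standard basis of $R$ using the defining relations $x^2 = y(xy)^{k-1}$, $y^2 = x(yx)^{k-1} + d(xy)^k$, $x(yx)^k = y(xy)^k = 0$, and $(xy)^k = (yx)^k$. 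Reading off the coefficient of $(xy)^k$ and assembling the sum over $b$ weighted by the corresponding $b^*$ produces a central element, which I rewrite in terms of $p_1 = xy+yx$, $p_2 = x(yx)^{k-1}$, $p_3 = y(xy)^{k-1}$, $p_4 = (xy)^k$ and powers of $p_1$.

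The main obstacle is the combinatorial bookkeeping: for each identity one must determine exactly which monomials $b$ yield a nonzero socle coefficient after reduction. Terms whose word-length is too short never reach the socle, while those with word-length too long reduce to zero; only a narrow window of $b$'s contributes. The coefficient $d$ appearing in $\Delta(p_1 q_1) = dp_1$ and $\Delta(p_1 q_2) = \Delta(p_2 q_1) = \Delta(p_4 q_1) = dp_2$ traces back to the $d(xy)^k$ summand of $y^2$ (together with the $dy$ tail of the representative of $q_1$), which in characteristic~$2$ is essentially the only mechanism for producing a socle term from an otherwise subsocle expression. The vanishing identities $\Delta(q_1) = \Delta(q_2) = \Delta(p_3 q_2) = 0$ then say that, without a multiplier $p_i$ to push the expression up to length $2k$, the candidate contributions either cancel pairwise or never arise, while the identities $\Delta(p_3 q_1) = \Delta(p_2 q_2) = p_1^{k-1}$ and $\Delta(p_4 q_2) = p_3$ are produced by pairing the length-maximal factors of the relevant $p_i$'s with the principal summands $y(xy)^{k-2}$ of $q_1$ and $x(yx)^{k-2}$ of $q_2$.
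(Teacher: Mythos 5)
Your proposal follows the same route as the paper: both reduce the computation to the identity $\Delta(\alpha)=\sum_{b\in B\setminus\{1\}}\langle f_\alpha(C(b)),1\rangle\,b^*$ obtained from $\Phi_0=\mathrm{id}$ and $\Psi_1(1\otimes b\otimes 1)=C(b)$, represent $p_iq_j$ by the cocycle $z\mapsto p_i\cdot f_{q_j}(z)$ on $P_1$, and then run over the basis $B$ extracting socle coefficients. This is exactly the paper's argument, which likewise just tabulates the product cocycles $p_iq_j$ and the resulting values $\langle a(C(b)),1\rangle$.
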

\begin{proof}
We have already seen that $\Delta(a)(1\x 1) = \sum \limits_{b \not=1} \langle a (C(b)),1\rangle b^*$, so we only need to compute $\langle a (C(b)),1\rangle$ on elements of degree 1. It is easy to check that
$$p_1q_1 = (y(xy)^{k-1}, \ dyxy ), \quad p_2q_1 = ( 0, \ x(yx)^{k-1}+d(xy)^k ),$$ 
$$p_3q_1 = ( 0, \ y(xy)^{k-1}), \quad p_4q_1 =( 0, \ (xy)^k ),$$
and also one can do the same for each $p_iq_2$ by symmetry. Now it is clear that
$$\langle a (C(b)),1\rangle =  \begin{cases} dk, & a = q_1, b = (xy)^k \\ 
d(k-1), & a=p_1q_1, b \in \{(xy)^{k-1}, (yx)^{k-1} \}\\
d, & a \in \{ p_1q_2, p_2q_1 \}, b=y\\
1, & a \in \{p_3q_1, p_2q_2 \}, b \in \{xy, yx\} \text{ or } a=p_4q_1, b = y  \text{ or } a=p_4q_2, b = x \\
0, & \text{otherwise,}
\end{cases}$$
hence required formulae hold.
\end{proof}

\begin{lemma}
$\Delta(x)=0$ for any monomials $x \in HH^2(R)$.
\end{lemma}
\begin{proof}
If $a \in HH^2(R)$, then
$$\Delta(a)(1 \x x \x 1) = \Delta(a\Psi_2)\Phi_1(1\x x \x 1) = \sum_{b\not=1} \langle (a\Psi_2) (b \x x+x\x b), 1 \rangle b^*,$$
$$\Delta(a)(1 \x y \x 1) =\Delta(a\Psi_2)\Phi_1(1\x y \x 1) = \sum_{b\not=1} \langle (a\Psi_2) (b \x y+y\x b), 1 \rangle b^*.$$
Observe that $\Psi_2(1 \x b \x x \x 1+1\x x \x b \x 1) = t_1(b \x x \x 1 + xC(b))$. Obviously, $\Delta(q_1q_2)=0$. Furthermore, we have 
$$q_2^2 = ( 1, \ 0 ), \quad q_1^2 = ( 0, \ 1+d^2y^2+ {{d^3k} \over {2}}\cdot (xy)^k ).$$
One needs to compute $\Psi_2(1\x b \x x \x 1 + 1\x x \x b \x 1)$:
\begin{enumerate}
\item if $b=(xy)^i$ for $1 \leq i \leq k-1$, then $t_1 \big( b \x x \x 1 + xC(b) \big) = 1 \x r_x \x y(xy)^{i-1} + (yx)^{k-1} \x r_y \x (xy)^{i-1}+  \delta_{i,1} \big( y(xy)^{k-2} \x r_x \x (yx)^{k-1} + dy(xy)^{k-2} \x r_x \x y(xy)^{k-1} \big)$,
\item if $b = (yx)^i$ for $1 \leq i \leq k-1$, then $t_1 \big( b \x x \x 1 + xC(b) \big)  = y(xy)^{i-1} \x r_x \x 1+(yx)^{i-1} \x r_y \x (xy)^{k-1} + \delta_{i,1} \big( (xy)^{k-1} \x r_x \x y(xy)^{k-2} + dx \x r_x \x (xy)^{k-1} + dy(xy)^{k-1} \x r_x \x y(xy)^{k-2} \big)$,
\item if $b=x(yx)^i$ for $1 \leq i \leq k-1$, then $ t_1 \big( b \x x \x 1 + xC(b) \big)  = (xy)^i \x r_x \x 1+ x(yx)^{i-1} \x r_y \x (xy)^{k-1} + 1\x r_x \x (yx)^i  + (yx)^{k-1} \x r_y \x x(yx)^{i-1} + \delta_{i,1} \big( dx^2 \x r_x \x (xy)^{k-1} + d(xy)^k \x r_x \x y(xy)^{k-2} + dy(xy)^{k-2} \x r_x \x (xy)^k \big)$,
\item if $b=y(xy)^{k-1}$, then $ t_1 \big( b \x x \x 1 + xC(b) \big)  = y \x r_y \x 1 +1 \x r_y \x y + dy\x r_y \x y + d\x r_y \x x(yx)^{k-1} + d^2 y\x r_y \x x(yx)^{k-1}$,
\item if $b=(xy)^k$, then $ t_1 \big( b \x x \x 1 + xC(b) \big)  = x \x r_x \x x + x^2 \x r_x \x 1$,
\item if $b=x$ or $b=y(xy)^{i}$ for $0 \leq i \leq k-2$, then $t_1 \big( b \x x \x 1 + xC(b) \big)  =0$.
\end{enumerate}
Now consider $\Psi_2(1\x b \x y \x 1 + 1\x y \x b \x 1) = t_1 \big( b \x y \x 1 + yC(b) \big)$:
\begin{enumerate}
    \item if $b=(yx)^i$ for $1 \leq i \leq k-1$, then $t_1 \big( b \x y \x 1 + yC(b) \big) = 1 \x r_y \x x(yx)^{i-1} + (xy)^{k-1} \x r_x \x (yx)^{i-1} + d x\x r_x \x x(yx)^{i-1} + dx^2 \x r_x \x (yx)^{i-1}  + \delta_{i,1} \big( x(yx)^{k-2} \x r_y \x (xy)^{k-1} + d \x r_x \x x^2 + d (yx)^{k-1} \x r_y \x (xy)^{k-1} \big)$, 
   \item if $b = (xy)^i$ for $1 \leq i \leq k-1$, then $t_1 \big( b \x y \x 1 + yC(b) \big) = x(yx)^{i-1} \x r_y \x 1 +(xy)^{i-1} \x r_x \x (yx)^{k-1} + d (xy)^{i-1} \x r_x \x y(xy)^{k-1} + \delta_{i,1} \big( (yx)^{k-1} \x r_y \x x(yx)^{k-2} + d(yx)^{k-1} \x r_y \x (xy)^{k-1} \big)$,
   \item if $b = x(yx)^{k-1}$ then $t_1 \big( b \x y \x 1 + yC(b) \big) = y \x r_y\x 1 + 1 \x r_y \x y + d y \x r_y \x y + d \x r_y \x x(yx)^{k-1} + d^2 y \x r_y \x x(yx)^{k-1}$,
   \item if $b = y(xy)^i$ for $1 \leq i \leq k-1$, then $t_1 \big( b \x y \x 1 + yC(b) \big)  = (yx)^i \x r_y \x 1 + y(xy)^{i-1} \x r_x \x (yx)^{k-1} + dy(xy)^{i-1} \x r_x \x y(xy)^{k-1} + 1 \x r_y \x (xy)^i + (xy)^{k-1} \x r_x \x y(xy)^{i-1} + dx \x r_x \x (xy)^i + dx^2\x r_x \x y(xy)^{i-1}$,
   \item if $b = (xy)^k$ then $t_1 \big( b \x y \x 1 + yC(b) \big) = y \x r_y \x y + 1 \x r_y \x x(yx)^{k-1} + dy \x r_y \x x(yx)^{k-1}$,
   \item  $t_1 \big( b \x y \x 1 + yC(b) \big) = 0$ otherwise.
\end{enumerate}
Hence lemma holds by given computations.
\end{proof}

\subsection{Middle degree cases}
\begin{lemma}
We have $\Delta(q_1w_1) = \Delta(q_2w_2) = p_1^{k-2} w_3$, $\Delta(q_1w_2) = \Delta(q_2w_3) = \big( 1+d^3(l+1)p_4 \big) q_1^2 + dw_2$, $\Delta(q_1w_3) = q_2^2 + dw_3$, $\Delta(q_2w_1) = q_2^2$, where $l = {{k} \over {2}}$.
\end{lemma}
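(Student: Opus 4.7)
The plan is to apply Tradler's formula on cochains, combined with the comparison morphisms $\Phi_\bullet, \Psi_\bullet$ and the explicit weak self-homotopy $t_\bullet$ from Section 3, exactly as was carried out for the degree-1 and degree-2 cases in the two preceding lemmas. For a class $\alpha \in HH^3(R)$ represented by a cocycle $f \in Hom_{R^e}(P_3, R)\cong R$, its image $\Delta(\alpha) \in HH^2(R)$ is the cocycle in $Hom_{R^e}(P_2, R) \cong R \times R$ whose values on the two generators $1 \otimes r_x \otimes 1$ and $1 \otimes r_y \otimes 1$ are
\[
\Delta(\alpha)(1 \otimes r_z \otimes 1) = \sum_{b \in B \setminus \{1\}} \Bigl\langle \sum_{i=1}^{3} (f \Psi_3)\bigl(\sigma_i\bigr),\, 1 \Bigr\rangle b^{*}, \qquad z \in \{x, y\},
\]
where, for each summand $1 \otimes a_1 \otimes a_2 \otimes 1$ in the expansion of $\Phi_2(1 \otimes r_z \otimes 1)$, the three cyclic insertions $\sigma_i$ range over the triples $(a_1, a_2, b)$, $(a_2, b, a_1)$, $(b, a_1, a_2)$, and all signs disappear because $\operatorname{char} K = 2$.

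First I would fix, for each of the five products $q_i w_j$, a chain-level cocycle in $Hom_{R^e}(P_3, R) \cong R$; since cup product on bar cochains is concatenation, this is obtainable as $q_i \smile w_j$ pulled back along $\Phi_3$, and the results are consistent with the algebra presentation of Theorem 2.1 of \cite{10}. Second, I would expand $\Phi_2(1 \otimes r_x \otimes 1)$ and $\Phi_2(1 \otimes r_y \otimes 1)$ using the formulas listed at the end of Section 4, producing an $O(k)$-term sum in each case. Third, for each such summand and each $b \in B \setminus \{1\}$, I would iterate
\[
\Psi_3(1 \otimes a_1 \otimes a_2 \otimes a_3 \otimes 1) = t_2\bigl(a_1 \cdot t_1(a_2 \cdot C(a_3))\bigr)
\]
on the three cyclic rotations, multiply by $f$, and read off the socle coefficient via $\langle -, 1 \rangle$.

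Finally, the resulting pair $(F, G) \in R \times R$ representing $\Delta(\alpha)$ is matched against the explicit cocycle descriptions of $q_1^2$, $q_2^2$, $w_1$, $w_2$, $w_3$ and their $p_i$-products given at the beginning of Section 5, which yields the identification of each $\Delta(q_iw_j)$ as the stated element of $\mathcal{A}_2$. As a sanity check, one verifies the relations of Theorem 2.1 are compatible with the output, e.g.\ $\Delta(q_1w_1) = \Delta(q_2w_2)$ is forced by $q_1w_1 = q_2w_2$, and $\Delta(q_2w_3) = \Delta(q_1w_2) + w_3$ follows from the degree-3 relation $q_1w_2 + q_2w_3 = 0$ once $\Delta(q_2w_3) - \Delta(q_1w_2)$ has been identified.

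The main obstacle is combinatorial bookkeeping. Each $\Phi_2(1 \otimes r_z \otimes 1)$ is a sum of $O(k)$ monomials, there are $O(k)$ basis elements $b$, and the evaluation of $\Psi_3$ branches through the many cases of $t_1$ and $t_2$, producing hundreds of candidate contributions. The vast majority vanish because $t_1, t_2$ are zero on most of their inputs and because $\langle -, 1 \rangle$ detects only socle elements, but isolating the few surviving terms — in particular the $d$-deformed contributions producing the correction $d\, w_2$ in $\Delta(q_1w_2)$, the extra $w_3$ in $\Delta(q_1w_3)$, and the coefficient $1 + d^3(l+1) p_4$ in $\Delta(q_1w_2)$ — and reassembling them into the compact formulas stated is the technical heart of the argument.
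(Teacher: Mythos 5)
Your plan is the same as the paper's: represent each product $q_iw_j$ by a cocycle in $Hom_{R^e}(P_3,R)\cong R$, apply Tradler's formula in the form $\Delta(\alpha)=\Delta(f\Psi_3)\Phi_2$, expand $\Phi_2(1\otimes r_x\otimes 1)$ and $\Phi_2(1\otimes r_y\otimes 1)$, and evaluate $\Psi_3=t_2\bigl(a_1t_1(a_2C(a_3))\bigr)$ on the cyclic rotations of each resulting triple, reading off socle coefficients against the basis $B$. That is exactly the reduction the paper performs (the paper groups the three rotations as $t_2\bigl(b\otimes r_z\otimes 1+z\,t_1(b\otimes z\otimes 1+zC(b))\bigr)$ and its analogues, which is your formula after using $t_1(zC(z))=1\otimes r_z\otimes 1$).

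However, the reduction is not the proof; the proof \emph{is} the evaluation. The entire content of the paper's argument is the explicit case-by-case determination of $\Psi_3(b,x)$, $\Psi_3(2,b,x,i)$, $\Psi_3(3,b,x,i)$, $\Psi_3(b,y)$, $\Psi_3(2,b,y)$, $\Psi_3(3,b,y)$ for every $b\in B$, spanning several pages, from which the surviving socle contributions and hence the pairs $(F,G)$ are extracted. You correctly anticipate that most terms vanish and even name the correction terms that must emerge ($dw_2$, the extra $w_3$, the coefficient $1+d^3(l+1)p_4$), but none of these is derived: you have asserted the answer, not computed it. Two smaller points: (i) your displayed formula treats the summands of $\Phi_2$ as if they had trivial right tails, whereas they are of the form $1\otimes a_1\otimes a_2\otimes c$ and the coefficient $c$ must multiply $b^{*}$ on the right (the paper's $b^{*}\cdot y(xy)^{k-2-i}$ etc.); this matters for identifying which class the output represents. (ii) Your ``sanity check'' that $\Delta(q_2w_3)=\Delta(q_1w_2)+w_3$ ``follows from the relation $q_1w_2+q_2w_3=0$'' is not sound as stated --- if $q_1w_2=q_2w_3$ held in $HH^3(R)$ then well-definedness of $\Delta$ would force $\Delta(q_2w_3)=\Delta(q_1w_2)$ exactly, with no $w_3$ discrepancy; the relation in $\mathcal{I}$ of degree $3$ is $q_1w_2+q_2w_3$ only in combination with other generators, and the identity $\Delta(q_2w_3)=\Delta(q_1w_2)+w_3$ must come out of the computation itself, as it does in the paper. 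So the approach is the right one, but as written there is a genuine gap: the case analysis that establishes every one of the five claimed formulas is absent.
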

\begin{proof}
In this proof we use delta-like function
$$\mu_{a,b} = \begin{cases}
         1, & a \geqslant b, \\
         0, & a < b.
        \end{cases}$$
        Fix $a \in HH^3(R)$. By identifications $1 \x a_1 \x ... \x a_n \x 1 = a_1 \x ... \x a_n$ in $R$-bimodule $R \x \overline{R}^{\x n} \x R$ one can observe that
$$\Delta(a) (1 \x r_x \x 1) = \Delta(a \Psi_3)\Phi_2(1\x r_x \x 1) =  \sum \limits_{b \not= 1} \langle (a\Psi_3) (b \x x \x x + x \x b \x x + x \x x \x b), 1 \rangle b^* +$$ 
$$+\sum \limits_{b \not= 1} \sum \limits_{i=0}^{k-2} \langle (a\Psi_3) (b \x y(xy)^i \x x + y(xy)^i \x x \x b + x \x b \x y(xy)^i), 1 \rangle b^* \cdot y(xy)^{k-2-i} + 
$$
$$+\sum \limits_{b \not= 1} \sum \limits_{i=1}^{k-1} \langle (a\Psi_3) ((yx)^i \x y \x b + y \x b \x (yx)^i + b \x (yx)^i \x y), 1 \rangle b^* \cdot (xy)^{k-1-i}.
$$

It is easy to see that 
$$\Psi_3(b \x x \x x + x \x b \x x + x \x x \x b) = t_2 \Big( b \x r_x \x 1 +  xt_1 \big( b \x x \x 1 + xC(b) \big) \Big).$$
Denote this formula by $\Psi_3(b,x)$. 

\begin{itemize}
\item If $b=x$, then $\Psi_3(b,x) = 1 \x 1$,
\item if $b = x(yx)^i$ for $1 \leq i \leq k-1 $, then $\Psi_3(b,x) = \sum \limits_{j=1}^{i} \big( (xy)^j \x (xy)^{i-j} + x(yx)^{j-1} \x y(xy)^{i-j} \big) + 1 \x (yx)^i +  \delta_{i,1} \big( (yx)^{k-1} \x (xy)^{k-1}  + dy(xy)^{k-1} \x (yx)^{k-1} +dy(xy)^{k-1} \x (xy)^{k-1} \big)$,
\item if $b = y(xy)^{k-1}$, then $\Psi_3(b,x) = 1 \x x  + x \x 1 $,
\item if $b = (xy)^i$ for $1 \leq i \leq k-1$, then $\Psi_3(b,x)= 1\x y(xy)^{i-1} + \delta_{i,1} dy(xy)^{k-1} \x y(xy)^{k-2}$,
\item if $b = (yx)^i$ for $2 \leq i \leq k-1$, then $\Psi_3(b,x)= \sum \limits_{j=0}^{i-1} (yx)^j \x y(xy)^{i-j-1} + \sum \limits_{j=1}^{i} y(xy)^{j-1} \x (xy)^{i-j}+ dx(yx)^{k-1} \x (xy)^{i-1}$,
\item if $b = yx$, then $\Psi_3(b,x)= y \x 1 + d x(yx)^{k-1} \x 1 + d(xy)^{k-1} \x x + \sum \limits_{j=0}^{k-2} d x(yx)^j \x (yx)^{k-j-1} + \sum \limits_{j=1}^{k-2} d (xy)^j \x x(yx)^{k-j-1}$,
\item if $b=(xy)^k$, then $\Psi_3(b,x)= 1\x y(xy)^{k-1}$,
\item $\Psi_3(b,x) = 0$ otherwise.
\end{itemize}

Next for $1 \leq i \leq k-2$ one needs to compute
$$\Psi_3 (b \x y(xy)^i \x x + y(xy)^i \x x \x b + x \x b \x y(xy)^i) = $$
$$=t_2 \Big( b t_1 (y(xy)^i \x x \x 1) + y(xy)^i t_1\big( x C(b) \big) + x t_1 \big(b C(y(xy)^i) \big) \Big).$$ 
Denote this formula by $\Psi_3(2, b, x, i)$.
\begin{itemize}
\item If $b = x(yx)^j$ for $1 \leq j \leq k-1$, $i>0$ and $ i+j \ge k$, then $\Psi_3(2, b, x, i) = y(xy)^{k-1} \x y(xy)^{i+j-k} + \delta_{i+j,k} (yx)^{k-1} \x y^2$,
\item if $b = y$ and $i>0$, then $\Psi_3(2, b, x, i) = dy(xy)^{k-1} \x y(xy)^{i-1} + \delta_{i,1} d(yx)^{k-1} \x y^2 $,
\item if $b = xy$, then $\Psi_3(2, b, x, i) =  d\sum \limits_{l=1}^{k-1} \big( (yx)^l \x y(xy)^{k-1-l+i} + y(xy)^{l-1}\x (xy)^{k-l+i} \big) + d^2 x(yx)^{k-1} \x (xy)^{k-1+i} +
  \delta_{i,0} \big( 1 \x (yx)^{k-1} + dx(yx)^{k-1} \x x(yx)^{k-2}  + d \x y(xy)^{k-1} \big)+$
    $$+ \begin{cases} (yx)^{k-1} \x (xy)^i, & i>0\\ 
  \sum \limits_{l=1}^{k-1} (yx)^l \x (yx)^{k-1-l} +  \sum \limits_{l=1}^{k-1} y(xy)^{l-1} \x x(yx)^{k-1-l}, & i=0
  \end{cases}$$
 \item if $b = (yx)^j$ for $1 \leq j \leq k-1$, $i+j \ge k$ and $i>0$, then $\Psi_3(2, b, x, i) =  x \x y(xy)^{i+j-k}$,
 \item $\Psi_3(2, b, x, i) = 0$ otherwise.
 \end{itemize}
Only one step remains now: we need to describe
$$\Psi_3(3,b,x,i) := \Psi_3 \big( (yx)^i \x y \x b + b \x (yx)^i \x y + y \x b \x (yx)^i \big)$$
for $1 \leq i \leq k-1$.
\begin{itemize}
    \item If $b=x(yx)^j$ for $0 \leq j \leq k-1$, this formula can be rewritten as:
    \begin{enumerate}
        \item  $j=k-1$: $\sum \limits_{l=1}^i \big( (yx)^l \x (yx)^{i-l} + y(xy)^{l-1} \x x(yx)^{i-l} \big) + y \x x(yx)^{i-1} + \delta_{i,1} \big( y \x x^2 + d(xy)^{k-1} \x x^2 \big),$
        \item $i+j \geqslant k$, $j \not= k-1$: $y \x x(yx)^{i+j-k} + dx(yx)^{k-1} \x x(yx)^{i+j-k} + \delta_{i+j,k} d(xy)^{k-1} \x x^2,$
    \end{enumerate}
   \item if $b=(yx)^j$ for $1 \leq j \leq k$, this formula can be rewritten as:
      \begin{enumerate}
        \item  if $j=1$:   $dy(xy)^{i-1} \x x^2 + \delta_{i,1}  dx(yx)^{k-1} \x (xy)^{k-1} + \delta_{i,k-1} \big( d \sum \limits_{l=1}^{k-1} (xy)^{l} \x x(yx)^{k-l-1} + d \sum \limits_{l=1}^k x(yx)^{l-1} \x (yx)^{k-l} \big) $,
        \item if $1 < j \leq k-1$: $\mu_{i+j, k+1} \cdot  x(yx)^{k-1} \x x(yx)^{i+j-k-1}  + \delta_{i+j,k} \cdot \big(d \sum \limits_{l=1}^k x(yx)^{l-1} \x (yx)^{i+j-l} +d \sum \limits_{l=1}^{k-1} (xy)^{l} \x x(yx)^{i+j-l-1}  \big) + \delta_{i+j,k+1} \cdot  (xy)^{k-1} \x x^2$,
\item if $j=k$: $\delta_{i,1} (xy)^{k-1} \x x^2 + x(yx)^{k-1} \x x(yx)^{i-1}+ \sum \limits_{l=1}^i \big( (yx)^l \x y(xy)^{i-l} + y(xy)^{l-1} \x (xy)^{i-l+1} \big) + d x(yx)^{k-1} \x (xy)^i + d \sum \limits_{l=1}^k x(yx)^{l-1} \x (yx)^{k-l+i} + d \sum \limits_{l=1}^{k-1} (xy)^l \x x(yx)^{k+i-l-1}$.
    \end{enumerate}
    \item  if $b=y(xy)^j$ for $0 \leq j \leq k-1$, this formula can be rewritten as:
     \begin{enumerate}
        \item  $j=0$: $1 \x x(yx)^{i-1} + dy \x x(yx)^{i-1} + d^2 x(yx)^{k-1} \x x(yx)^{i-1} + \delta_{i,1}d^2 (xy)^{k-1} \x x^2$,
        \item $j=1$ and $i=1$: $d(xy)^{k-1} \x (xy)^k + d^2 y(xy)^{k-1} \x (xy)^k$,
    \end{enumerate}
    \item $\Psi_3(3, b, x, i) = 0$ otherwise.
\end{itemize}

Now one should deal with $1 \x r_y \x 1$:
$$\Delta(a) (1 \x r_y\x 1) = \sum \limits_{b \not=1} \langle \Delta(a) (\Psi_3 (b \x y \x y + y \x b \x y + y \x y \x b)),1\rangle b^* + $$
$$\sum \limits_{b \not=1} \sum\limits_{i=0}^{k-2} \langle \Delta(a) (\Psi_3 (b \x x(yx)^i \x y + x(yx)^i \x y \x b + y \x b \x x(yx)^i)),1\rangle b^* x(yx)^{k-2-i}+$$
$$ +d\sum \limits_{b \not=1} \sum\limits_{i=0}^{k-1} \langle \Delta(a) (\Psi_3 (b \x x(yx)^i \x y + x(yx)^i \x y \x b + y \x b \x x(yx)^i)),1\rangle b^* (xy)^{k-1-i}$$
$$ + \sum \limits_{b \not=1} \sum\limits_{i=1}^{k-1} \langle \Delta(a) (\Psi_3 ((xy)^i \x x\x b + x \x b \x (xy)^i +b \x (xy)^i \x x)),1\rangle b^* ((yx)^{k-i-1} + dy(xy)^{k-i-1})$$

Firstly, observe that
$$\Psi_3 (b \x y \x y + y \x b \x y + y \x y \x b) = t_2 \Big( b \x r_y \x 1 + yt_1(b \x y \x 1) + yt_1 \big( yC(b) \big) \Big).$$
Denote this formula by $\Psi_3(b,y)$.
\begin{itemize}
    \item If $b= (xy)^i$ for $1 \leq i \leq k-1$, then \begin{enumerate}
        \item if $i=1$ then $\Psi_3(b,y) = x \x 1 + d x \x y $,
        \item if $i>1$ then $\Psi_3(b,y) = \sum\limits_{l=1}^{i-1} (xy)^l \x x(yx)^{i-l-1} + \sum\limits_{l=1}^{i} x(yx)^{l-1} \x (yx)^{i-l}  + d\sum\limits_{l=1}^{i} x(yx)^{l-1} \x y(xy)^{i-l} + d\sum\limits_{l=1}^{i-1} (xy)^l \x (xy)^{i-l}$,
    \end{enumerate}
    \item if $b= (yx)^i$ for $1 \leq i \leq k-1$, then \begin{enumerate}
        \item if $i=1$ then $\Psi_3(b,y) = 1\x x  + dy \x x +  d^2 (xy)^{k-1} \x x^2 + d^2 x(yx)^{k-1} \x x $,
        \item if $i>1$ then $\Psi_3(b,y) = 1 \x x(yx)^{i-1} +  dy \x x(yx)^{i-1} + d^2 x(yx)^{k-1} \x x(yx)^{i-1}$.
    \end{enumerate}
    \item if $b= (xy)^k$, then $\Psi_3(b,y) = \sum \limits_{l=1}^{k-1} (xy)^l \x x(yx)^{k-l-1} + \sum \limits_{l=1}^{k} x(yx)^{l-1} \x (yx)^{k-l}$.
    \item if $b= x(yx)^{k-1}$, then $\Psi_3(b,y) = d\sum \limits_{l=1}^{k-1} (xy)^l \x x(yx)^{k-l-1} + d \sum \limits_{l=1}^{k} x(yx)^{l-1} \x (yx)^{k-l}$.
   \item if $b = y(xy)^i$ for $0 < i \leq k-1$, then $\Psi_3(b,y) = \sum \limits_{l=1}^i \big( (yx)^l \x (yx)^{i-l} + y(xy)^{l-1} \x x(yx)^{i-l} \big) + d\sum \limits_{l=1}^i \big( (yx)^l \x y(xy)^{i-l} + y(xy)^{l-1} \x (xy)^{i-l+1} \big) + dx(yx)^{k-1} \x x(yx)^{i-1} + dy\x (xy)^i + 1 \x (xy)^i  + \delta_{i,1} \big( (xy)^{k-1} \x (yx)^{k-1} + dy(xy)^{k-1} \x (yx)^{k-1} +d^2 y(xy)^{k-1} \x y(xy)^{k-1} \big)$.
   \item $\Psi_3(b, y) = 0$ otherwise.
\end{itemize}
Observe that if $0 \leq i \leq k-1$ then 
\[ \Psi_3 \big( b \x x(yx)^i \x y + x(yx)^i \x y \x b + y \x b \x x(yx)^i \big) = t_2\bigg( x(yx)^i \x t_1 \big( yC(b) \big) +\]
\[+yt_1\Big( b \cdot \sum \limits_{l=0}^{i} (xy)^l \x x \x (yx)^{i-l} + b \cdot \sum \limits_{l=0}^{i-1} x(yx)^l \x y \x x(yx)^{i-l-1} \Big) \bigg),\]
so in order to deal with second and third terms of the sum we only need to know values of obtained formula. Denote this formula by $\Psi_3(2,b,y)$.

\begin{itemize}
    \item If $b = (xy)^j$ for $1 \leq j \leq k$, then
    \begin{enumerate}
        \item  if $i>0$ then $\Psi_3(2,b,y) = \delta_{j,k} \Big( \sum \limits_{l=1}^{i} (xy)^l \x (xy)^{i-l+1} + \sum \limits_{l=1}^{i+1} x(yx)^{l-1} \x y(xy)^{i-l+1} \Big) +  \mu_{i+j, k} \Big(y \x x(yx)^{i+j-k} + d x(yx)^{k-1} \x x(yx)^{i+j-k} + \delta_{i+j, k} \big( d(xy)^{k-1} \x x^2 \big) \Big)$,
        \item if $i=0$ and $j=k$ then $ \Psi_3(2,b,y) = x \x y  + y\x x + dx(yx)^{k-1} \x x  + d(xy)^{k-1} \x x^2$.
    \end{enumerate}
  \item if $b=yx$, then $\Psi_3(2,b,y) = d(xy)^i \x y(xy)^{k-1} +\delta_{i,0} \Big( 1 \x (xy)^{k-1}  + d y \x (xy)^{k-1} + d^2 x(yx)^{k-1} \x (xy)^{k-1}+ d\x x^2  + \sum \limits_{l=1}^{k-1} \big( (xy)^l \x (xy)^{k-l-1}  + x(yx)^{l-1} \x y(xy)^{k-l-1} \big) \Big)  + \mu_{i, 1}\big( dy(xy)^{k-1} \x (yx)^i+  (xy)^{k-1} \x (yx)^i \big)$,
  \item if $b = (yx)^j$ for $j \ge 2$ and $i=0$, then $\Psi_3(2,b,y) =  dy(xy)^{k-1} \x (yx)^{j-1}$,
  \item if $b = y(xy)^j$ for $0 \leq j \leq k-1$, then $\Psi_3(2,b,y) = \mu_{j,1} \delta_{i,0} \big( d y(xy)^{k-1} \x y(xy)^{j-1} +\delta_{j,1}d (yx)^{k-1} \x y^2 \big) +  \mu_{i+j,k-1} \big(d \sum \limits_{l=1}^{k-1} (xy)^l \x x(yx)^{i+j-l} + d \sum \limits_{l=1}^{k} x(yx)^{l-1} \x (yx)^{i+j+1-l} \big) + \mu_{i+j,k} \mu_{i, 1} \big( x(yx)^{k-1} \x x(yx)^{i+j-k} + d \sum \limits_{l=1}^k x(yx)^{l-1} \x (yx)^{i+j+1-l} + d \sum \limits_{l=1}^{k-1} (xy)^{l} \x x(yx)^{i+j-l} + \delta_{i+j,k} (xy)^{k-1} \x x^2 \big)$,
  \item if $b= x(yx)^{k-1}$, then \begin{enumerate}
      \item if $i = 0$ then $\Psi_3(2,b,y) =  x\x 1 + 1 \x x$,
      \item if $i>0$ then $\Psi_3(2,b,y) =  1\x x(yx)^i + \sum \limits_{l=1}^{i} (xy)^{l} \x x(yx)^{i-l} + \sum \limits_{l=1}^{i+1} x(yx)^{l-1} \x (yx)^{i-l+1}$,
  \end{enumerate}
  \item $\Psi_3(2, b, y) = 0$ otherwise.
\end{itemize}
Finally, it remains to compute
$$\Psi_3 \big( (xy)^i \x x \x b + x \x b \x (xy)^i + b \x (xy)^i \x x \big) = t_2 \bigg((xy)^i t_1 \big( x C(b) \big) + xt_1\Big( b C\big( (xy)^i \big) \Big) \bigg)$$
for $1 \leq i \leq k-1$. Denote this by $\Psi_3(3, b, y)$.
\begin{itemize}
    \item If $b = (xy)^j$ for $1 \leq j \leq k$ and $i+j-1 \ge k$, then $\Psi_3(3, b, y) = y(xy)^{k-1} \x y(xy)^{i+j-k-1} + \delta_{i+j,k+1} (yx)^{k-1} \x y^2$,
    \item if $b = y(xy)^j$ for $0 \leq j \leq k-1$ and $i+j \ge k$, then $\Psi_3(3, b, y) = x \x y(xy)^{i+j-k}$,
    \item if $b = x$, then $\Psi_3(3, b, y) = \delta_{i,1} d y(xy)^{k-1} \x y(xy)^{k-2}  + 1 \x y(xy)^{i-1} $,
    \item $\Psi_3(3, b, y) = 0$ otherwise.
\end{itemize}
It remains to note that $q_1w_1 = (xy)^{k-1} = q_2w_2, q_1w_2 = y, q_1w_3 = x + dyx, q_2w_1 = x, q_2w_3 = y$. Now the required formulae follow from direct computations.
\end{proof}

Now we need to understand how $\Delta$ works on elements of degree 4.

\begin{lemma}
$\Delta(x) = 0$ for any monomials $x \in HH^4(R)$.
\end{lemma}
\begin{proof}
Firstly observe that for $a \in HH^4(R)$ following formula holds: $\Delta (a) (1\x 1)= \Delta(a\Psi_4) \Phi_3 (1\x 1) = $
$$\sum \limits_{b \ne 1} \langle (a \Psi_4) (b \cdot x \cdot x \cdot x + x \cdot b \cdot x \cdot x + x \cdot x \cdot b \cdot x + x \cdot x \cdot x \cdot b), 1 \rangle b^* + $$
$$\sum \limits_{b \ne 1} \sum \limits_{i=0}^{k-2} \langle (a \Psi_4) (x \cdot y(xy)^i \cdot x \cdot b + y(xy)^i \cdot x \cdot b \cdot x + x \cdot b \cdot x \cdot y(xy)^i + b \cdot x \cdot y(xy)^i \cdot x), 1 \rangle b^* y(xy)^{k-i-2} +$$ 
$$\sum \limits_{b \ne 1} \sum \limits_{i=1}^{k-1} \langle (a \Psi_4) (x \cdot (yx)^i \cdot y \cdot b + (yx)^i \cdot y \cdot b \cdot x + y \cdot b \cdot x \cdot (yx)^i + b  \cdot x \cdot (yx)^i \cdot y), 1 \rangle b^* (xy)^{k-i-1} +$$ 

$$\sum \limits_{b \ne 1} \langle (a \Psi_4) (b \cdot y \cdot y \cdot y + y \cdot b \cdot y \cdot y + y \cdot y \cdot b \cdot y + y \cdot y \cdot y \cdot b), 1 \rangle b^*(1+dy+d^2 y(xy)^{k-1}) + $$
$$\sum \limits_{b \ne 1} \langle (a \Psi_4) (b \cdot y \cdot x(yx)^{k-1} \cdot y + y \cdot x(yx)^{k-1} \cdot y \cdot b + x(yx)^{k-1} \cdot y \cdot b \cdot y + y \cdot b \cdot y \cdot x(yx)^{k-1}), 1 \rangle b^*(d + dy + d^3 x(yx)^{k-1}) + $$
$$\sum \limits_{b \ne 1} \sum \limits_{i=0}^{k-2} \langle (a \Psi_4) (y \cdot x(yx)^i \cdot y \cdot b + x(yx)^i \cdot y \cdot b \cdot y + y \cdot b \cdot y \cdot x(yx)^i + b \cdot y \cdot x(yx)^i \cdot y), 1 \rangle b^* x(yx)^{k-i-2} +$$ 
$$\sum \limits_{b \ne 1} \sum \limits_{i=1}^{k-1} \langle (a \Psi_4) (y \cdot (xy)^i \cdot x \cdot b + (xy)^i \cdot x \cdot b \cdot y + x \cdot b \cdot y \cdot (xy)^i + b  \cdot y \cdot (xy)^i \cdot x), 1 \rangle b^* (yx)^{k-i-1}.$$ 
Secondly, note that $t_3$ is not equal to zero only on $(xy)^k \x 1$. Denote the evaluation for the $i$-th sum by $\Psi_4(i,b)$ for any $b \in B$. Now for the first sum
\begin{enumerate}
  \item[1.1)] if $b = xy$, then $\Psi_4(1,b) = d \x y(xy)^{k-2}$,
 \item[1.2)] if $b = xyx$, then $\Psi_4(1,b) = d \x (yx)^{k-1} + d \x (xy)^{k-1}$,
 \item[1.3)] if $b = (xy)^k$, then $\Psi_4(1,b) = 1 \x 1 $,
 \item[1.4)] $\Psi_4(1,b) = 0$ otherwise.
\end{enumerate}

For the third sum and $1 \leq i \leq k-1$:
\begin{enumerate}
 \item[3.1)] if $b = yx$, then $\Psi_4(3,b) = d \x y(xy)^{i-1}$,
 \item[3.2)] $\Psi_4(3,b) = 0$ otherwise.
\end{enumerate}

For the fourth sum
\begin{enumerate}
 \item[4.1)] if $b = x(yx)^{k-1}$, then $\Psi_4(4,b) = d \x 1 $,
 \item[4.2)] if $b = (xy)^k$, then $\Psi_4(4,b) = 1 \x 1 $, \
 \item[4.3)] $\Psi_4(4,b) = 0$ otherwise.
\end{enumerate}

For the fifth sum
\begin{enumerate}
 \item[5.1)] if $b = y$, then $\Psi_4(5,b) = d\x 1 $,
 \item[5.2)] $\Psi_4(5,b) = 0$ otherwise.
\end{enumerate}

It is easy to see that other sums gives zero impact, so $\Psi_4(i,b)=0$ for any $b\in B$ and any $i \in \{2, 6, 7\}$.
Now one can deduce that $\Delta(e) = \Delta(p_3e) = 0$, $\Delta(p_2e)= \Delta(x(yx)^{k-1}) = 0$, $\Delta(p_4 e) = 1+1+dy+dy = 0$ and $\Delta(p_1^i e) = \Delta((xy)^i + (yx)^i) = 0$ for any $i \ge 1$, which yields this lemma.
\end{proof}

\subsection{Higher degree elements}
Note that if we know $\Delta(a)$ and $\Delta(b)$, then we don't need to compute $\Delta(ab)$ directly: it is sufficient to know $[a,b]$. Let $a$ be represented by cocycle $f:P_n \longrightarrow A$ and let $b$ be represented by $g:P_m \longrightarrow A$. One can use the following formula:
$$[a,b] = [f\circ \Psi_n, g \circ \Psi_m] \circ \Phi_{n+m-1}.$$

\begin{remark}
Observe that $\Phi_4$ can be visualized directly:
$$\Phi_4 (1 \x 1) = \sum \limits_{b \in B} 1\x b \x x \x x \x x \x b^* + \sum \limits_{b \in B} \sum \limits_{i=0}^{k-2} 1\x b \x x \x y(xy)^i \x x \x y(xy)^{k-i-2}b^* + $$
$$ \sum \limits_{b \in B} \sum \limits_{i=1}^{k-1} 1\x b \x x \x (yx)^i \x y \x (xy)^{k-i-1}b^*  + \sum \limits_{b \in B} 1\x b \x y \x y \x y \x (1+dy+d^2x(yx)^{k-1})b^* +$$
$$  \sum \limits_{b \in B} \sum \limits_{i=0}^{k-2} 1\x b \x y \x x(yx)^i \x y \x x(yx)^{k-i-2}b^* +\sum \limits_{b \in B} \sum \limits_{i=1}^{k-1} 1\x b \x y \x (xy)^i \x x \x (yx)^{k-i-1}b^* +$$
$$ \sum \limits_{b \in B} 1\x b \x y \x x(yx)^{k-1} \x y \x (d+d^2y+d^3x(yx)^{k-1})b^* + d \x x(yx)^{k-1} \x x \x x \x x \x x(yx)^{k-1} +$$
$$ + d \x x(yx)^{k-1} \x x \x y(xy)^{k-2} \x x \x (xy)^{k} + d \x x(yx)^{k-1} \x x \x (yx)^{k-1} \x y \x x(yx)^{k-1} +$$
$$ +d \x x(yx)^{k-1} \x y \x y \x y \x y^2 + d^2 \x x(yx)^{k-1} \x y \x x(yx)^{k-1} \x y \x y^2 +$$
$$ + d\x x(yx)^{k-1} \x y \x (xy)^{k-1} \x x \x x(yx)^{k-1}.$$
\end{remark}

\begin{lemma}
We have $\Delta(q_1e) = \Delta(q_2e) = 0$.
\end{lemma}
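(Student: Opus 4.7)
The plan is to bypass a direct evaluation of $\Delta$ on the degree-$5$ class $q_i e$ by exploiting the $BV$ identity together with the vanishing results already established, reducing the problem to showing that a certain Gerstenhaber bracket vanishes.

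Since $\mathrm{char}\,K = 2$, the $BV$ relation collapses to $\Delta(q_i e) = [q_i, e] + \Delta(q_i)\,e + q_i\,\Delta(e)$. The lemma on degree-$1$ classes gives $\Delta(q_1) = \Delta(q_2) = 0$, and the lemma on degree-$4$ classes gives $\Delta(e) = 0$. Hence $\Delta(q_i e) = [q_i, e]$, and it suffices to show that $[q_1, e]$ and $[q_2, e]$ both vanish in $HH^4(R)$.

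For the bracket I use the formula announced at the start of this subsection,
$$[q_i, e] \;=\; \bigl[q_i \circ \Psi_1,\; e \circ \Psi_4\bigr] \circ \Phi_4,$$
together with the explicit expression for $\Phi_4(1\otimes 1)$ given in the remark immediately above. The bar-level representatives are easy to write down: $q_i\circ\Psi_1$ sends $1\otimes b\otimes 1$ to $q_i(C(b))$ via the derivation $C$ recorded earlier, while $e\circ\Psi_4$ unfolds recursively through the weak self-homotopy $t_\bullet$, using that $t_3$ is supported solely on $(xy)^k\otimes 1$. Since $q_i\circ\Psi_1$ is a $1$-cochain and $e\circ\Psi_4$ is a $4$-cochain, the bar bracket in characteristic $2$ reduces to
$$[f,g](a_1\otimes a_2\otimes a_3\otimes a_4) \;=\; f\bigl(g(a_1\otimes\cdots\otimes a_4)\bigr) + \sum_{j=1}^{4} g\bigl(a_1\otimes\cdots\otimes f(a_j)\otimes\cdots\otimes a_4\bigr).$$
Evaluating this against each summand of $\Phi_4(1\otimes 1)$ and reading off the answer in the basis of $HH^4(R)$ should then yield zero. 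Most contributions die for structural reasons: the ``$f\circ g$'' part requires $g(a_1\otimes\cdots\otimes a_4)\in R$ to lie outside the kernel of $r\mapsto q_i(C(r))$, and the ``$g\circ f$'' part requires $f(a_j)\neq 0$ for at least one of the four middle slots, which occurs only for a short list of basis entries.

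The main obstacle will be the combinatorial bulk of $\Phi_4(1\otimes 1)$ and the four-fold iteration of the self-homotopy in $\Psi_4$, compounded by the $d$-dependent correction terms that break the naive $x\leftrightarrow y$ symmetry between $q_1$ and $q_2$. I plan to treat $q_1$ explicitly, then obtain the result for $q_2$ by tracking how the $d$-corrections in $\Phi_4$ and in the differentials transform under the swap $x\leftrightarrow y$, so that only a short supplementary computation is needed beyond the symmetric skeleton.
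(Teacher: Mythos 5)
Your plan follows essentially the same route as the paper: reduce via the BV identity (using $\Delta(q_i)=\Delta(e)=0$ from the earlier lemmas) to the bracket $[q_i,e]=[q_i\circ\Psi_1,\,e\circ\Psi_4]\circ\Phi_4$, then evaluate term-by-term against the explicit $\Phi_4(1\otimes 1)$, splitting the bracket into the single $f\circ g$ term and the four $g\circ_i f$ terms. The paper's computation bears out your expectations, including the caveat that the $x\leftrightarrow y$ symmetry is broken by $d$: the $f\circ g$ part vanishes because $\Psi_4\Phi_4(1\otimes 1)=1$ and $q_i\Psi_1(1)=0$, all four $\circ_i$-contributions vanish for $q_1$, while for $q_2$ the $\circ_2$ and $\circ_4$ contributions each equal $dx(yx)^{k-2}$ and cancel in characteristic $2$.
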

\begin{proof}
Fix $a \in HH^1(R)$. By definitions,
$$[a,e] (1 \x 1) = [a \circ \Psi_1, e \circ \Psi_4] \circ \Phi_4 (1\x 1) = \big( (a \Psi_1) \circ (e  \Psi_4) \big)  \Phi_4(1 \x 1) + \big( (e \Psi_4) \circ (a \Psi_1) \big) \Phi_4(1 \x 1).$$
What is $\Psi_4\Phi_4$? It follows from the proofs of above given lemmas that
$$\Psi_3 \Phi_3 (1\x 1) = \Psi_3(x \cdot x \cdot x)  = 1 \x 1,$$
so
$$\Psi_4\Phi_4 (1 \x 1) = \sum \limits_{b \ne 1} t_3 \big( b \Psi_3\Phi_3 (1\x 1) b^* \big) +dt_3 \big( x(yx)^{k-1} \Psi_3 \Phi_3 (1\x 1) x(yx)^{k-1} \big)  = 1.$$ 
Finally, for $a = q_1$ or $a = q_2$ we obtain
$$\big( (a \circ \Psi_1) \circ (e \circ \Psi_4) \big) \circ \Phi_4(1 \x 1) = (a \circ \Psi_1)(1) = 0.$$

Now consider $F^u = (e \circ \Psi_4) \circ (u \circ \Psi_1) = \sum \limits_{i=1}^4 F^u_i$, where $F^u_i = (e \circ \Psi_4) \circ_i (u \circ \Psi_1)$. In order to compute this we need to know $u \Psi_1(b)$ for $u = q_1$ or $q_2$. By direct computations 
$$q_1 \Psi_1 (b) = \begin{cases}
y(xy)^{k-2}, & b =x\\
1+dy, & b=y\\
di x(yx)^i + \delta_{i,1} \cdot y(xy)^{k-1}, & b = x(yx)^i, \ 1 \leq i \leq k-1\\
(xy)^i + (yx)^i + d(i+1)y(xy)^i, & b =y(xy)^i, \ 1 \leq i \leq k-1\\
 di (xy)^i + x(yx)^{i-1}, & b =(xy)^i, \ 1 \leq i \leq k \\
di(yx)^i+x(yx)^{i-1}, & b = (yx)^i, \ 1 \leq i \leq k-1,
\end{cases}$$
$$q_2 \Psi_1 (b) = \begin{cases}
1, & b =x\\
x(yx)^{k-2}+d(xy)^{k-1}, & b=y\\
(xy)^i+(yx)^i, & b = x(yx)^i, \ 1 \leq i \leq k-1\\
x(yx)^{k-1}, & b =yxy\\
y(xy)^{i-1} , & b =(xy)^i, \ 1 \leq i \leq k \\
y(xy)^{i-1} + \delta_{i,1} d x(yx)^{k-1}, & b = (yx)^i, \ 1 \leq i \leq k-1\\
0, & \text{otherwise.}
\end{cases}$$

Since $e\Psi_4 (1 \x b \x a_1 \x a_2 \x a_3 \x 1) = et_3 \Big(b t_2\Big( a_1 t_1 \big( a_2C(a_3) \big) \Big) \Big)$, we only need to calculate $F^u_1$ and $F^u_2$ on elements $b \x a_1 \x a_2 \x a_3$, such that $a_2a_3 \not \in B$. \par 
1) First consider $F_1^u$ for $u \in HH^1(R)$. It is easy to see that $t_2 \big(x t_1(x \x x\x 1) \big) = 1 \x 1 $,  so
$$ et_3\Big( q_i\Psi_1 (b) t_2\big( x t_1(x \x x\x 1) \big) \Big) b^* =  et_3(q_1\Psi_1 (b) \x 1)b^* = 0$$ 
for any $b \in B$ and any $i \in \{1, 2\}$. Hence all summands in $\Phi_4 (1\x 1)$, consisting $x \x x \x x$, give us zero. It remains to check that $t_2 (y t_1 (y \x y \x 1)) = 0$, so
   $$F^{q_1}_1 =  F^{q_2}_1 = 0.$$ \par 
2) For $F_2^u$ (where $u \in HH^1(R)$) one needs to notice that
$$t_3(b t_2(u\Psi(x) t_1(x \x x\x 1))) = t_3(b t_2(u\Psi(x) \x r_x \x 1)) = 0,$$
for any $b \in B$ and $ u \in \{q_1, q_2\}$, so first and eighth sums give us zero. Secondly, $u\Psi_1 t_1 (y \x y \x 1) = u\Psi_1 \x r_y \x 1$, so
$$et_3 \Big(b t_2 \big( u\Psi_1 t_1 (y \x y \x 1) \big) \Big) (1 +dy +d^2x(yx)^{k-1}) = 
\begin{cases}
 dx(yx)^{k-2}, & u = q_2, b = yxy \\
 0, & \text{otherwise. }
\end{cases}$$
Obviously, another sums from $\Phi_4 (1\x 1)$ are coming up with zeros, so
$$F^{q_1}_2 = 0 \text{ and } F^{q_2}_2 =  dx(yx)^{k-2}.$$\par 
3) In order to compute $F^u_3$ notice that $\big( (e \circ \Psi_4) \circ_3 (u \circ \Psi_1) \big) (1 \x b \x a_1 \x a_2 \x a_3 \x 1) = et_3 \Big(b t_2 \big(a_1 t_1 \big( u \Psi_1 (a_2) C(a_3) \big) \big) \Big)$. Then
$$t_2 \big( x t_1 ( q_1 \Psi_1 (x) \x x \x 1 ) \big) = t_2 \big(x t_1 (y(xy)^{k-2} \x x \x 1) \big) = 0,$$
$$t_2 \big( x t_1 ( q_2 \Psi_1 (x) \x x \x 1 ) \big) = t_2 \big( q_2 \Psi_1 (x) t_1 (x \x x \x 1) \big), $$
and all other sums from $\Phi_4 (1\x 1)$ give zero, what can easily be verified via definition of $t_1$ and $t_2$, therefore in this case we can use computations of the previous case. So,
 $$F^{q_1}_3 = F^{q_2}_3 = 0.$$ \par 
 4) Finally, we want to describe $F^u_4$. It is not hard to show that 
 $$\big( (e \circ \Psi_4) \circ_4 (u \circ \Psi_1) \big) (1 \x b \x a_1 \x a_2 \x a_3 \x 1) = et_3 \bigg(b t_2 \Big( a_1 t_1 \big(a_2 C( u \Psi_1 (a_3)) \big) \Big) \bigg).$$ 
 Also
 \[C(u\Psi_1(x)) = \begin{cases}
 \sum \limits_{l=0}^{k-2} (yx)^l \x y \x (xy)^{k-2-l} + \sum \limits_{l=0}^{k-3} y(xy)^l \x x \x y(xy)^{k-3-l} & u =q_1\\
0, & u=q_2
 \end{cases}
\]
and
\[C(u\Psi_1(y)) = \begin{cases}
 d \x y \x 1, & u=q_1\\
\Big( \sum \limits_{l=0}^{k-2} (xy)^l \x x \x (yx)^{k-2-l} + \sum \limits_{l=0}^{k-3} x(yx)^l \x y \x x(yx)^{k-3-l} \Big)(1+dy)+  & \\
 + dx(yx)^{k-2} \x y \x 1, & u=q_2.\\
 \end{cases}
\]
So we have
 $$t_2\bigg(xt_1 \Big(x C\big( q_1 \Psi_1(x) \big) \Big) \bigg) = t_2 \bigg(x t_1 \Big(\sum \limits_{l=0}^{k-2} x(yx)^l \x y \x (xy)^{k-2-l} + \sum \limits_{l=0}^{k-3} (xy)^{l+1} \x x \x y(xy)^{k-3-l} \Big) \bigg) = 0,$$
and
 $$t_2 \bigg(x t_1 \Big(y(xy)^i C \big(q_1\Psi_1 (x) \big) \Big) \bigg) = \delta_{i,0} t_2\big(x \x r_y \x (xy)^{k-2} + dy(xy)^{k-1} \x r_x \x (xy)^{k-2} \big).$$
Further observe that
 $$t_2 \big(x t_1 (q_2 \Psi_1 (x) \x x \x 1) \big) = t_2 \big(q_2 \Psi_1 (x) t_1 (x \x x \x 1) \big),$$
 and by definition of $t_1$ and $t_2$ one can deduce that all other sums from $\Phi_4(1 \x 1)$ give zero impact, so this case also can be reduced to previous ones. So
 $$F^{q_1}_4 = 0 \text{ and } F^{q_2}_4 = dx(yx)^{k-2}.$$
 
Now we only need to compile our previous results and notice that $q_i\Delta(e) = 0$ for $i \in \{1,2\}$:
 $$\Delta(q_1e) (1\x 1 ) = [q_1,e](1\x 1) = \sum \limits_{i=1}^4 F^{q_1}_i = 0,$$
 $$\Delta(q_2e) (1\x 1 ) =[q_2,e](1\x 1) =  \sum \limits_{i=1}^4 F^{q_2}_i = dx(yx)^{k-2} + dx(yx)^{k-2} = 0.$$
\end{proof}

\begin{remark}
For elements of degree six we should understand that is $\Phi_5$:
$$\Phi_5 (1 \x a \x 1) = \sum_b 1 \x a \x b \x x \x x \x x \x b^* + \sum_b \sum_{i=0}^{k-2} 1 \x a \x b \x x \x y(xy)^i \x x \x y(xy)^{k-2-i} b^* $$
$$\sum_b \sum_{i=1}^{k-1} 1 \x a \x b \x x \x (yx)^i \x y \x (xy)^{k-1-i} b^* +  \sum_b 1 \x a \x b \x y \x y \x y \x (1 + dy + d^2 x(yx)^{k-1}) b^* +$$
$$\sum_b \sum_{i=0}^{k-2} 1 \x a \x b \x y \x x(yx)^i \x y \x x(yx)^{k-2-i} b^* + \sum_b \sum_{i=1}^{k-1} 1 \x a \x b \x y \x (xy)^i \x x \x (yx)^{k-1-i} b^* +$$
$$ \sum_b 1 \x a \x b \x y \x x(yx)^{k-1} \x y \x (d + d^2y + d^3x(yx)^{k-1})b^* + d \x a \x x(yx)^{k-1} \x x \x x \x x \x x(yx)^{k-1} +$$
$$+ d \x a \x x(yx)^{k-1} \x x \x y(xy)^{k-2} \x x \x (xy)^k + d \x a \x x(yx)^{k-1} \x x \x (yx)^{k-1} \x y \x x(yx)^{k-1} +$$
$$+ d \x a \x x(yx)^{k-1} \x y \x y \x y \x y^2 + d^2 \x a \x x(yx)^{k-1} \x y \x x(yx)^{k-1} \x y \x y^2 +$$
$$+ d \x a \x x(yx)^{k-1} \x y \x (xy)^{k-1} \x x \x x(yx)^{k-1}.$$
\end{remark}

\begin{lemma}
For all $v \in \{w_1,w_2,w_3\}$ and $e \in HH^4(R)$ all the brackets $[v,e]$ are equal to zero.
\end{lemma}

\begin{proof}
For $v \in HH^2(R)$ and $e \in HH^4(R)$ we have
$$[v,e] (1 \x a \x 1) = \big( (v \Psi_2) \circ (e\Psi_4) \big) \Phi_5 (1 \x a \x 1) + \big( (e\Psi_4)\circ (v \Psi_2) \big) \Phi_5 (1 \x a \x 1).$$
Now we need to show that first summand here is equal to zero. It is obvious that $\big( (v \Psi_2) \circ (e\Psi_4) \big) \Phi_5 = \big( (v \Psi_2) \circ_1 (e\Psi_4) \big) \Phi_5 + \big( (v \Psi_2) \circ_2 (e\Psi_4) \big) \Phi_5$. Let us denote these summands by $S_1^v$ and $S_2^v$ respectively. One can quickly show that $S_2^v$ is equal to zero for all $v$. Indeed,
$$S_2^v (a_1 \x .. \x a_5) = v t_1 (a_1 C(et_3 (a_2 t_2 (a_3 t_1( a_4 C(a_5) ))))),$$
and since $C(1) = 0$ and $et_3(b \x 1) = \begin{cases}1, & b = (xy)^k \\ 0,& \text{ otherwise }
\end{cases}$, the required equality holds.\par 
One can prove that $S_1^v$ equals to zero on all summands from $\Phi_5 (1\x a \x 1)$ except fourth and seventh. Finally,
$$S_1^{v}\big(1 \x a \x (xy)^k \x y \x x(yx)^{k-1} \x y \x (d + d^2 y + d^3x(yx)^{k-1})\big) = \begin{cases} dy + d^2 x(yx)^{k-1}, & a=y, \ v = w_2 \\ dx, &a=y, \ v = w_3 \\ 0, & \text{otherwise,} \end{cases}$$
$$S_1^{v}\big(1 \x a \x (xy)^k \x y \x y \x y \x (1 + d y + d^2 x(yx)^{k-1}) \big) = \begin{cases} dy + d^2 x(yx)^{k-1}, &a=y, \ v = w_2 \\ dx, & a=y, \ v = w_3 \\ 0, & \text{otherwise,} \end{cases}$$
and for other combinations of these sums $S_1^v$ is equal to zero. So $S_1^v (a_1 \x .. \x a_5) = 0$ for any $a_1,..,a_5 \in B$, and hence $\big( (v \Psi_2) \circ (e\Psi_4) \big) \Phi_5 = 0$. \par 
It remains to consider $\big( (e\Psi_4)\circ (v \Psi_2) \big) \Phi_5 = \sum \limits_{i=1}^4 \big( (e\Psi_4)\circ_i (v \Psi_2) \big) \Phi_5$. Denote $\big( (e\Psi_4)\circ_i (v \Psi_2) \big) \Phi_5$ by $F_i^v$. It is easy to see that $F_i^v (a_1 \x .. \x a_5)$ equals to zero for any $i \in \{1, 2, 4\}$ on any summand of $\Phi_5$, except maybe first, fourth, eighth and eleventh summands, because otherwise $t_1 (a_4 C(a_5)) = 0$. \par 
1) Obviously, $t_2 \big( y t_1 (y \x y \x 1) \big) = 0$, so for $F_1^v$ it remains to investigate only first and eighth summands. The computations show that
$$
F_1^v(1\x a \x b \x x \x x \x x \x b^*) = 
$$
$$= \begin{cases}
et_3 \Big(vt_1 \big(x \x x \x y(xy)^{i-1} + y(xy)^{k-1} \x y \x (xy)^{i-1} \big) \x 1 \Big) (xy)^{k-i}, & b=(xy)^i \text{ and } a = x\\
et_3 \Big(v t_1 \big(y \x y \x x(yx)^{i-1} + y^2 \x x \x (yx)^{i-1} \big) \x 1 \Big) (yx)^{k-i}, & b = (yx)^i \text{ and } a = y\\
et_3 \big((yx)^{k} \x 1 \big) y(xy)^{k-2}, & v = w_2, \text{ } b = xyx \text{ and } a = x\\
et_3  \big((xy)^{k} \x 1 \big) x(yx)^{k-2}, & v = w_1, \text{ } b = yxy \text{ and } a=y\\
 et_3 \big((xy)^k \x 1 \big), & v = w_2, \text{ } b = (xy)^k \text{ and } a = y\\
0, & \text{otherwise}
\end{cases}$$
$$= \begin{cases}
1, & v=w_1, \text{ } b=(xy)^k \text{ and } a = x\\
d(xy)^{k-1}, & v=w_3, \text{ } b = xy \text{ and } a = x\\
d(yx)^{k-1}, & v=w_1, \text{ } b = yx \text{ and } a = y\\
y(xy)^{k-2}, & v = w_2, \text{ } b = xyx \text{ and } a = x\\
x(yx)^{k-2}, & v = w_1, \text{ } b = yxy \text{ and } a=y\\
1, & v = w_2, \text{ } b = (xy)^k \text{ and } a = y\\
0, & \text{otherwise.}
\end{cases}$$

So
$$F_1^{w_1} = \begin{cases} 1, & a =x \\ x(yx)^{k-2} + d(yx)^{k-1}, & a = y, \end{cases} \quad F_1^{w_2} = \begin{cases} y(xy)^{k-2}, & a = x \\ 1, & a = y, \end{cases} $$
$$ F_1^{w_3} = \begin{cases} d(xy)^{k-1}, & a = x \\ 0, & a = y. \end{cases}$$
2) For $F_2^v$ we need to examine only summands with numbers one, four, eight and eleven from $\Phi_5$. So for the first summand
$$F_2^{v}(1 \x a \x b \x x \x x \x x \x b^*) = $$
$$
=\begin{cases}
et_3 \big(at_2 (x^3 \x r_x \x 1) \big), &v = w_1 \text{ and } b = (xy)^k\\
et_3\big(at_2(xyx \x r_x \x 1) \big), & v = w_3 \text{ and } b = (xy)^k\\
et_3 \big(at_2((xy)^{k} \ r_x \x 1)\big) y(xy)^{k-2}, & v = w_2 \text{ and } b = xyx \\
et_3\big(ay\x x + dax(yx)^{k-1}\x x + day(xy)^{k-1} \x (yx)^{k-1} \big), & v = w_3 \text{ and } b = y(xy)^{k-1}\\
et_3\big(at_2 (y^2 \x r_x \x 1) \big)(yx)^{k-1}, & v = w_3 \text{ and } b = yx \\
0, & \text{otherwise}
\end{cases}
$$
$$= \begin{cases}
  1, &v = w_1, \text{ } b = (xy)^k \text{ and } a= x \\
  d(yx)^{k-1}, & v = w_3, \text{ } b = (xy)^k \text{ and } a= x \\
y(xy)^{k-2}, & v = w_2, \text{ } b = xyx  \text{ and } a= x\\
 d (yx)^{k-1}, & v = w_3, \text{ } b = y(xy)^{k-1} \text{ and } a= x\\
d (yx)^{k-1}, & v = w_3, \text{ } b = yx \text{ and } a= x\\
0, & \text{otherwise}
\end{cases}$$
and for the forth summand
$$F_2^v \big(1\x a \x b \x y \x y \x y \x (1+dy+d^2x(yx)^{k-1})b^* \big) = $$
$$=\begin{cases} et_3\big( at_2 ((xy)^k \x r_y \x 1)\big)(x(yx)^{k-2}+ d(yx)^{k-1}), & v=w_1, \text{ } a=y \text{ and } b = yxy \\
et_3 \big(at_2((xy)^k \x r_y \x 1) \big) (1+dy+d^2x(yx)^{k-1}), &  v=w_2, \text{ } a=y \text{ and } b = (xy)^k\\
0, & \text{otherwise}
\end{cases} $$
$$= \begin{cases}x(yx)^{k-2}, & v=w_1, \text{ }  a=y \text{ and } b = yxy \\
1, & v=w_2, \text{ } a=y \text{ and } b = (xy)^k\\
0, & \text{otherwise.}
\end{cases}$$
So eighth and eleventh summands give us zero and hence 
$$F_2^{w_1} = \begin{cases}  1, & a =x \\ x(yx)^{k-2}, & a = y, \end{cases} \quad F_2^{w_2} = \begin{cases} y(xy)^{k-2}, & a = x \\ 1, & a = y, \end{cases} \quad F_2^{w_3} = \begin{cases} d(yx)^{k-1}, & a = x \\ 0, & a = y. \end{cases}$$
3) In the case of $F_3^v$ if $t_1 \big(a_3 C(a_4) \big)=0$ then the summand of the form $\sum 1\x a_1 \x .. \x a_5 \x a_6$ give us zero. Also if $(a_3,a_4,a_5) = (x,x,x)$, then $t_1 \big(vt_1(x \x x \x 1) \x x \x 1 \big) = \begin{cases} 1\x r_x \x 1, & v = w_1 \\ 0, & v \not= w_1 \end{cases}$ and 
$$F_3^{w_1} (1 \x x \x (xy)^k \x x \x x \x x \x 1) = et_3 \big((xy)^k \x 1 \big) =  1.$$
There is a zero in all other cases. For the forth sum $F_3^{w_1}=0$, and if $v =w_2$ then
$$
 F_3^{v} (1 \x a \x b \x y \x y \x y \x 1) = 
$$
$$=\begin{cases}
et_3 \big((xy)^k\x 1 \big) (1 + dy), & v = w_2, \text{ } b = (xy)^k \text{ and } a = y \\
et_3 \big(a t_2 ((yx)^k \x r_y \x 1)\big), & v = w_3, \text{ } b = y(xy)^{k-1} \\
et_3 \big(d(xy)^k \x (yx)^{k-1} + d^2 (xy)^k \x y(xy)^{k-1} \big), & v = w_3, \text{ } b = (xy)^k \text{ and } a = x 
\end{cases} 
$$

\noindent so if $b=(xy)^k$ for $v=w_2$ or if $b=y(xy)^{k-1}$ for $v=w_3$ this sum equals to $  (1 + dy) (1+dy+d^2x(yx)^{k-1}) = 1$ for $a=y$. If $b = (xy)^k$ for $v=w_3$ this sum equals to $ d(yx)^{k-1}$ for $a=x$.\\
On the seventh summand $F_3^{v} = 0$, if $v \not=w_3$. And in the case of $v=w_3$ we have
$$F_3^{w_3} \big(1 \x a \x b \x y \x x(yx)^{k-1} \x y \x (d+d^2y +d^3 x(yx)^{k-1}) \big) = $$
$$=\sum_b et_3\Big( at_2 \big( bt_1(xy \x y \x 1 + yx \x y \x 1 + 2dyxy \x y \x 1) \big) \Big)(d+d^2y +d^3 x(yx)^{k-1}) = $$
$$=F_3^{w_3} \big(1 \x a \x b \x y \x y \x y \x (1+dy +d^2 x(yx)^{k-1})\big),$$
so we don't need to compute it, since these two summands kill each other. All other combinations give us zero, so eighth and eleventh sums also give us zero. So \\
$$F_1^{w_1} = \begin{cases} 1, & a =x\\  0, &  a =y, \end{cases} \quad F_1^{w_2} = \begin{cases}  0,& a = x\\  1, & a = y, \end{cases} \quad F_1^{w_3} = \begin{cases} d(yx)^{k-1} + d(yx)^{k-1}, & a = x\\ x +x, & a = y \end{cases} = 0.$$
4) Finally we need to know evaluation of $F_4^v$ in first, forth, eighth and eleventh summands from $\Phi_5$. It is obvious that for the first summand $F_4^v = 0$ if $v \not= w_1$. But
$$F_4^{w_1} (1\x a \x b \x x \x x \x x \x 1)=et_3 (a t_2 (b \x r_x \x 1)) = F_3^{w_1} (1\x a \x b \x x \x x \x x \x 1),$$
so first summand gives $\delta_{a,x} 1$, and $F_4^{w_1}$ is equal to zero for eighth sum. Further 
$$F_4^{w_2} (1\x a \x b \x y \x y \x y \x 1) = et_3 (a t_2 (b \x r_y \x 1)) = F_3^{w_2} (1\x a \x b \x y \x y \x y \x 1),$$
so $F_4^{w_2}$ gives $\delta_{a,y}1$ on the forth sum and $F_4^{v}$ gives zero $v \not= w_2$ for forth and eighth sums. So
$$F_1^{w_1} = \begin{cases} 1, & a =x\\  0, & a =y, \end{cases} \quad F_1^{w_2} = \begin{cases} 0, & a = x\\ 1, & a = y, \end{cases} \quad F_1^{w_3} = 0.$$

It remains to compute $[v,e] =\sum \limits_{i=1}^2 S_i^v + \sum \limits_{i=1}^4 F_i^v = 0$, for any $v \in \{w_1, w_2, w_3\}$, hence the required formulae hold.
\end{proof}
\begin{corollary}
 We have $\Delta(ve)=0$ for any $v \in \{w_1,w_2,w_3\}$.
\end{corollary}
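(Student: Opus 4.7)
The plan is to deduce this as a one-line consequence of the preceding Lemma via the defining BV-axiom.

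First I would recall the Batalin--Vilkovisky identity from the definition:
$$[a,b] = -(-1)^{(|a|-1)|b|}\bigl(\Delta(a \smile b) - \Delta(a)\smile b - (-1)^{|a|} a \smile \Delta(b)\bigr).$$
Since $\mathrm{char}\,K = 2$, all signs disappear and this rearranges to
$$\Delta(v \smile e) = [v,e] + \Delta(v)\smile e + v\smile \Delta(e).$$

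Next I would assemble the three ingredients on the right-hand side. The previous Lemma shows directly that $[v,e]=0$ for every generator $v \in \{w_1,w_2,w_3\}$. The Lemma about elements of degree 2 gives $\Delta(w_i)=0$ for $i=1,2,3$ (since each $w_i$ is a generator of degree 2 in $HH^*(R)$). Finally, the Lemma about elements of degree 4 records $\Delta(e)=0$ among its list of values. Substituting these three zeros into the displayed BV-identity immediately yields $\Delta(ve)=0$ for each $v \in \{w_1,w_2,w_3\}$.

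There is essentially no obstacle here: the whole point of computing the Gerstenhaber bracket $[v,e]$ in the preceding Lemma was to bypass a direct (and much longer) resolution-level computation of $\Delta(ve)$ via $\Phi_5$ and $\Psi_6$, and the BV-axiom converts that bracket computation into the desired $\Delta$-value for free. The only care needed is to verify that $\Delta(v)$ and $\Delta(e)$ have indeed already been established to vanish for these specific generators, which they have.
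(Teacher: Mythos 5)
Your proposal is correct and is exactly the argument the paper gives: it combines $\Delta(w_i)=0$, $\Delta(e)=0$, and $[w_i,e]=0$ from the preceding lemmas via the BV-identity (with all signs trivial in characteristic 2). No differences worth noting.
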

\begin{proof}
$\Delta(v)=0$ for any $v \in \{w_1,w_2,w_3\}$ by Lemma 3, and $\Delta(e)=0$ by Lemma 5. So by Tradler's equation we have
$$\Delta(ve) = \Delta(v) e + v \Delta(e) + [v,e] = 0 \cdot e + v \cdot 0 + 0 = 0$$
for $v \in \{w_1,w_2,w_3\}$.
\end{proof}

\section{Main result}
\begin{theorem}
Let $R=R(k,0,d)$ over an algebraically closed field $K$ of characteristic 2, and let $\Delta$ be $BV$-operator from Theorem 1. Then
\begin{enumerate}
    \item $\Delta$ is equal to 0 on the generators of $HH^*(R)$ from the set $\mathcal{X}$;
    \item $\Delta$ satisfies the equalities 
    \begin{itemize}
        \item in degree 1: $\begin{cases}  \Delta(p_1q_1) = dp_1, \ \Delta(p_1q_2) = \Delta(p_2q_1) = dp_2, \Delta(p_4q_1) = p_2\\
    \Delta(p_3q_1) = \Delta(p_2q_2) = p^{k-1}_1, \ \Delta(p_4q_2) = p_3;
    \end{cases}$
    \item in degree 3: $\begin{cases} \Delta(q_1w_1) = \Delta(q_2w_2) = p_1^{k-2} w_3, \\ \Delta(q_1w_2) = \Delta(q_2w_3) = (1+d^3(l+1)p_4) q_1^2 + dw_2 ,\\
    \Delta(q_2w_1) = q_2^2, \ \Delta(q_1w_3) + \Delta(q_2w_1) = dw_3.
    \end{cases}$
    \end{itemize}
    \item $\Delta(ab) = 0$ on all other combinations of generators $a, b \in \mathcal{X}$.
\end{enumerate}
\end{theorem}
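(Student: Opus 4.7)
The plan is to reassemble the previously established results into a single statement and then dispatch the remaining cases by reducing them to Gerstenhaber brackets via Tradler's identity.

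For Part (1), I would collect: degree-zero generators $p_1,p_2,p_3,p_4$ are killed by $\Delta$ for degree reasons since $\Delta$ has degree $-1$; $\Delta(q_1)=\Delta(q_2)=0$ is explicit in Lemma 2; $\Delta(w_i)=0$ for $i=1,2,3$ is an immediate consequence of Lemma 3, which establishes $\Delta=0$ on every degree-$2$ combination and in particular on the generators $w_i$ themselves; and $\Delta(e)=0$ is in Lemma 5. For Part (2), every listed identity is an explicit output of Lemma 2 (degree $1$), Lemma 4 (degree $3$), or Lemma 5 (degree $4$), so it suffices to collate them.

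For Part (3), I would invoke the characteristic-$2$ form of Tradler's equation,
$$\Delta(ab) = [a,b] + \Delta(a)b + a\Delta(b),$$
together with its iterated consequence already recorded in the text,
$$\Delta(abc) = \Delta(ab)c + \Delta(ac)b + \Delta(bc)a + \Delta(a)bc + \Delta(b)ac + \Delta(c)ab.$$
Since Part (1) kills $\Delta$ on each individual generator, every value of $\Delta$ on a product of generators reduces to an expression involving only $\Delta$ on pairs (handled in (2)) together with Gerstenhaber brackets between pairs of generators. The task therefore reduces to enumerating pairs $(a,b)$ whose product $ab$ does not appear in the list of (2) and checking that $[a,b]=0$ in $HH^*(R)$, modulo the relations in $\mathcal{I}$.

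A substantial block of these brackets is already disposed of: $[v,e]=0$ for every $v\in\{w_1,w_2,w_3\}$ is Lemma 7, and its corollary then gives $\Delta(ve)=0$ directly. Pairs involving two degree-$0$ generators reduce to trivial commutators of multiplication operators. The remaining pairs among $q_i$, $w_j$ and $e$ are treated by the same comparison-morphism machinery developed in Lemmas 3--7: one expands $[f,g]$ via $f\circ_i g$ and $g\circ_j f$, precomposes with the appropriate $\Phi_n$, and checks that the resulting sums either cancel in characteristic $2$ or are killed by the defining relations in $\mathcal{I}$. The main obstacle is purely one of bookkeeping: the enumeration of cases is long and the explicit computations via $t_i$, $\Phi_n$, $\Psi_n$ are tedious, but no new conceptual ingredient is needed beyond sections $3$ and $4$; the rich relation structure of $\mathcal{I}$ (notably $q_1w_1+q_2w_2$, $q_1w_2+q_2w_3$, etc.) makes many apparently distinct products of $\Delta$-values collapse onto the formulas already listed.
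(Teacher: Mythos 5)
Your proposal is correct and follows essentially the same route as the paper: the theorem is a compilation of the preceding lemmas (degree reasons for the $p_i$, the explicit low/middle/high degree computations for parts (1) and (2), and Tradler's identity plus the bracket computations $[q_i,e]=0$, $[w_j,e]=0$ and the relations in $\mathcal{I}$ for part (3)). The paper in fact gives no separate proof for this theorem beyond the lemmas you cite, so your collation is exactly its intended argument.
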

\begin{proof}
The equalities of degree 1 come from Lemma 2 and the equalities of degree 3 come from Lemma 4. Statements 1 and 3 now come from Lemmas 3-6 and Corollary 5 and it is clear from Lemmas above that there are no other conditions for $\Delta$.
\end{proof}

\end{document}